\documentclass[11pt]{article}
\usepackage{amsthm,amsmath,amssymb,amsfonts}
\usepackage{url}

\newcommand{\dx}{{\, \rm d}x}

\newcommand{\Div}{{\rm div}\,}

\newtheorem{thm}{Theorem}
\newtheorem{cor}{Corollary}

\newtheorem{lem}{Lemma}

\newtheorem{df}{Definition}
\newtheorem{rmk}{Remark}

\newcommand{\T}{\mathcal{T}}

\newcommand{\vr}{\varrho}

\newcommand{\vt}{\vartheta}

\newcommand{\vu}{\vc{u}}

\newcommand{\vc}[1]{{\bf #1}}

\newcommand{\F}[1]{$\mathbb{#1}$}
\newcommand{\Grad}{\nabla}

\newcommand{\tn}[1]{\mbox {\F #1}}

\newcommand{\dt}{\, {\rm d} t }

\newcommand{\dxdt}{\dx \dt}

\newcommand{\intO}[1]{\int_{\Omega} #1\dx}

\newcommand{\ep}{\varepsilon}

\begin{document}

\title{Continuity equation and vacuum regions { in} compressible flows}
\author{Anton\'\i n  Novotn\'y$^1$ and  Milan Pokorn\' y$^2$}
\maketitle

\bigskip

\centerline{$^1$ Institut de Math\'ematiques de Toulon, EA 2134}
\centerline{BP20132, 83957 La Garde, France}
\centerline{e-mail: {\tt novotny@univ-tln.fr}}

\centerline{$^{2}$ Charles University, Faculty of Mathematics and Physics}
\centerline{Mathematical Inst. of Charles University} 
\centerline{Sokolovsk\' a 83, 186 75 Prague 8, Czech Republic}
\centerline{e-mail: {\tt pokorny@karlin.mff.cuni.cz}}
\vskip0.25cm
\begin{abstract}
We investigate the creation and properties of eventual vacuum regions in the weak solutions of the continuity equation, in general, and
in the weak solutions of compressible Navier--Stokes equations, in particular. The main results are based on the analysis of renormalized solutions
to the continuity and pure transport equations and their inter-relations which are of independent interest.
\end{abstract}

\noindent{\bf MSC Classification:} 76N10, 35Q30

\smallskip

\noindent{\bf Keywords:} compressible Navier--Stokes equations, vacuum regions, renormalized solution, transport equation, continuity equation

\section{Introduction}

{ In this paper we consider evolution of the couple $(\vr,\vu)= (\vr(t,x),\vu(t,x))$---(density, velocity) of the compressible fluid---over the time interval $I$,} 
$I=(0,T)$, $T>0$, $t\in \overline I$
in a bounded domain $\Omega\in R^d$, $d\ge 2$, $x\in \Omega$. We concentrate on the question of the creation of vacuum regions $\{x\in\Omega|\vr(t,x)=0\}$ in this flow.
This is one of important open questions in the mathematical fluid mechanics of compressible fluids. It is closely connected to the question of regularity of solutions to the compressible Navier--Stokes equations. If the density is initially bounded away from zero, for weak solutions it is not excluded that the vacuum may appear in finite time. 

{ We  show that if this happens it must happen in a sense smoothly. More precisely, the measure of the set, where the density may be equal to zero, is continuous in time, or, in the other words, the vacuum (if any) creates and evolutes continuously in time and  the vacuum of positive measure cannot
appear instantaneously.}  The exact formulation of this result is { presented} in Theorem \ref{t2}. 

{ More interesting and intriguing is the second result.  It translates as follows: Assume that $(\vr,\vu)$ is a (standard) weak solution to the compressible Navier--Stokes equations.  Then whatever distributional solution $R$ with a small additional regularity (specified in (\ref{clR-})) of the continuity equation  with the same velocity $\vu$ we take (whatever arbitrary its initial data are!),  $R$ must develop at any time $t$ a vacuum region $\{x|R(t,x)=0\}$ that includes the vacuum of 
$\vr(t)$, i.e. $\{x|\vr(t,x)=0\}$ is contained in the vacuum set of the function $R$. This result definitely pleads for a non-existence of vacuum 
in compressible flows at least in many physically reasonable situations.} The exact formulation of this result is given in Theorem \ref{t3} and its Corollaries 
\ref{cor1}, \ref{cor2}.

{ On the other hand, it is important to recall that if the velocity field $\vu \in L^2(0,T; W^{1,2}(\Omega;R^d))$  (this is the generic situation for flows
of  Newtonian fluids with constant viscosities), there is  no direct way of constructing solutions $R$ to the continuity equation with the given velocity 
unless  {${\rm div}\, \vu\in L^1(I;L^\infty(\Omega))$---cf. DiPerna--Lions} \cite[Proposition II.1]{DL}. Indeed, existence of  solutions to the continuity equation 
with the transporting velocity fields  in spaces $L^1(I;W^{1,p}(\Omega;R^d))$, $p\in [1,\infty)$ only, is, in general, 
an open problem.} 

%the initial data for $R$ are controlled by initial data of $\vr$ - cf.
%but then the result given above is trivial. Anyway, such results give us hope that for sufficiently nice initial conditions for the compressible Navier--Stokes %equations we may have  solutions (for arbitrary large data and arbitrary large time interval) which do not develop vacuum.  

The  conclusions of our paper described above are based on nowadays classical results and techniques for the continuity and transport equation 
that have been forged within the process of the development of the existence theory for weak solutions to the compressible { Navier--Stokes} equations
and recently also for the mixtures of compressible fluids. They are all inspired by the classical regularization technique  
implemented to the investigation of transport equations with transport coefficients in Sobolev spaces in the seminal work of { DiPerna--Lions} \cite{DL}.
(The spaces needed for the results in \cite{DL} are those needed in the Friedrichs lemma about commutators with $\alpha=\infty$, $p=1$, 
cf. Lemma \ref{l2}.) Some of them are valid only within the functional setting of the transport theory \cite{DL} (namely those dealing
with extension of distributional solutions to weak solutions (up to the boundary), time integration of weak or distributional solutions and
passage from distributional or weak solutions to renormalized distributional or weak solutions)\footnote{The various notions of solutions used in 
the above text are rigorously defined in Section { \ref{S2}.}}. They are formulated in Subsection \ref{M2.2}
in Theorems \ref{th3} and \ref{th4}.  
Some of them, namely those valid for the renormalized solutions, must go beyond the transport theory \cite{DL}  
(in the sense that the transporting velocity belongs still
to Sobolev spaces but one requires less summability of the solution then the summability required in \cite{DL}){---in order to get stronger results with respect to the constitutive laws of pressure in the applications to compressible Navier--Stokes equations.} 
(This is notably the case of Theorems \ref{th1}
and \ref{th2} in Subsection \ref{M2.1}). Indeed, all available constructions of weak solutions to the compressible { Navier--Stokes} equations
provide a couple $(\vr,\vu)$ which satisfies the continuity equation in the renormalized sense.
The latter results are often formulated in the mathematical literature in a particular 
functional setting applicable to the concrete situation without ambition to full generality, see  Lions \cite{L4}, Feireisl \cite{FeBook} and
\cite{FeNo_book}, \cite{NoSt} if we limit ourselves to the monographs only. Our aim is to provide generalization and synthesis
of the results we need and  prove them in their full generality, either for the sake of completeness or if we could not find a reliable exhausting 
reference. 
%We shall comment about the available literature in the preamble of each auxiliary result. 

A new approach to the compactness in the compressible Navier-Stokes equations allowing to treat some other physically different situations
then \cite{FeNoPe}, \cite{FeBook}, \cite{FeNo_book} 
has been introduced by Bresch, Jabin \cite{BrJa}, deriving, in particular a "$\log\log$ estimate" for the Friedrichs type commutator,
\cite[Theorem 2.3.6]{BrJa1} in the { DiPerna--Lions} functional framework. This theory does not allow to go beyond the DiPerna--Lions
functional setting and seems at the time being so far in-exploitable for our purpose.

Among the main auxiliary questions which has to be answered in order to apply the theory of transport equations
 to the compressible fluid dynamics  in general, and
to the investigation of the vacuum states, in particular, are the following: 
\begin{enumerate}
\item What are the least conditions imposed on the transporting velocity $\vu$ (in terms of Sobolev spaces) and solution $\vr$ of the
continuity equation (in terms of Lebesgue spaces)  allowing to pass from renormalized distributional or weak solutions to time integrated weak solutions?
The answers to these questions are subject of Theorems \ref{th1} and \ref{th2}. 
\item  What are the least conditions on the couple $(\vr,\vu)$ (in the same functional setting) to pass from distributional solutions of the continuity or pure transport equations  to the weak (up to the boundary)
solutions (eventually to the renormalized weak solutions), and from distributional or weak solutions  to their time integrated
counterparts (eventually to the renormalized time integrated counterparts)?  The answer to these questions are given in Theorems \ref{th3} and \ref{th4}.
\item How are interconnected solutions of pure transport equation and continuity equations? and what does this interconnection 
imply for the formation of vacuum in the compressible flows? The first question is treated in Theorem \ref{th5}. The last question is
object of Theorems \ref{t2}, \ref{t3} and their Corollaries \ref{cor1}, \ref{cor2}. 
\end{enumerate}

It is to be { noticed that the  conditions mentioned in Items  1.-3.} determine in large extend the admissible constitutive laws 
in the theory of weak solutions to compressible { Navier--Stokes} equations, \cite{FeNoPe}, \cite{FeNo_book},  \cite{Fe2002},
\cite{BrJa}. The usefulness of the subject of Item 4. was firstly discovered 
in connection with the investigation of weak solutions of systems describing compressible mixtures, see \cite{3MNPZ}, \cite{NoPo},
\cite{AN}, \cite{VWY}.

Our approach is exclusively Eulerian. The Lagrangian approach (dealing with characteristics of the vector field $\vu$
rather than with the transport equation, and translating them afterwards to the Eulerian vocabulary)  introduced in seminal paper of Ambrosio \cite{Ambr} 
allows to extend some results of \cite{DL}
(namely those related to existence, uniqueness and passage from distributional or weak to renormalized distributional or weak solutions) 
to $L^1(I;BV(\Omega;R^d))$ vector fields\footnote{The space $BV(\Omega)$ is the space of functions with bounded variations.} with divergence always in $L^1(I;L^\infty(\Omega))$. It was extended and generalized in several papers 
by Ambrosio, Crippa,  De Lellis \cite{AmbrC}, \cite{CDL} and others. Further deep generalization of this approach consisting in replacing
the condition imposed on the divergence of $\vu$ by a weaker condition postulating that "$\vu$ is weakly incompressible" is due to Bianchini, Bonicatto
\cite{BIBO}. The latter result (which is essentially about the properties of the flow  of the vector field $\vu$) implies as a corollary the uniqueness for the pure 
transport equation under "weak incompressibility" condition. (It is not without interest, that a stronger form of this corollary can be obtained
within the Sobolev functional setting quite easily by the purely Eulerian approach \cite[Proposition 5]{AN}.) In contrast with conservation laws, where the
 $BV(\Omega)$ theory found many applications, it { has} not so far appeared to be exploitable in the theory of compressible { Navier--Stokes}
equations.

{ The paper is organised as follows. In Section \ref{S2} we introduce various notions of solutions to the continuity and transport equations that will be used in the sequel. Section \ref{M} is devoted to the formulation of the main results, and of the auxiliary results needed for their proofs, which are of independent interest.
Theorems \ref{t2} and \ref{t3} (and Corollary \ref{c1}  in Subsection \ref{M1}) deal with the properties of vacuum  in any renormalized time integrated weak solution of the continuity equation. This implies immediately the same properties of vacuum in any renormalized weak solution to the compressible Navier--Stokes equations. This issue is discussed in Subsection \ref{M3} (see namely Corollary \ref{c1} and Remark \ref{r1}). Theorems \ref{t2}--\ref{t3} and Corollaries \ref{cor1}, \ref{cor2} and \ref{c1}
are main results of the paper. Their proofs require a good understanding
of the relation between various types of solutions introduced in Section \ref{S2}. This issue of independent interest is treated in Subsection \ref{M2}.
The { matters} of time integration of renormalized distributional of weak solutions are treated in Subsection \ref{M2.1} (see Theorems \ref{th1}, \ref{th4}). 
The passage from distributional to  renormalized weak solution is handled in Subsection \ref{M2.2}  (see Theorems \ref{th3}, \ref{th5}). The passage from continuity and pure transport equation to a continuity equation is  formulated in Subsection \ref{M2.3} (see Theorem \ref{th5}). The remaining part of the paper is devoted
to the proof of Theorems (\ref{t2}--\ref{th5}). Section \ref{S4} collects three preliminary classical results whose conclusions will be frequently used throughout the proofs. Section \ref{S5} is devoted to the proof of Theorems \ref{th1}--\ref{th2}, Section \ref{S6} to the proof of Theorems \ref{th3}--\ref{th4} and Section \ref{S7}
to the proof of Theorem \ref{th5}. Finally in the last Section we combine the results of Theorems \ref{th1}--\ref{th5} to prove the main theorems: Theorems \ref{t2} and \ref{t3}.} 

We finish this  section by introducing the functional spaces and some notations.
In what follows, we use standard notation for the Lebesgue and Sobolev spaces ($L^p(\Omega)$ and $W^{1,p}(\Omega)$ with the corresponding norms  $\|u\|_{L^p(\Omega)}$ and $\|u\|_{W^{1,p}(\Omega)}$, respectively). We do not distinguish the notation for the norms for scalar- and vector-valued functions. However, the vector-valued functions are prin\-ted boldface and we write $\vu \in L^p(\Omega;R^d)$ instead of $\vu \in L^p(\Omega)$, similarly for other functions spaces. For function spaces of time and space dependent function we use the standard notation for the Bochner spaces $L^p(I;L^q(\Omega))$ or $L^p(I;L^q(\Omega;R^d))$, respectively. { We also use the notation $C([0,T];L^p(\Omega))$ for continuous functions on interval [0,T] with values in $L^p(\Omega)$ and $C_{{\rm weak}}([0,T];L^p(\Omega))$ a vector subspace of $L^\infty(0,T;L^p(\Omega))$ of functions continuous on $[0,T]$ with respect to the weak topology
of { $L^p(\Omega)$.} More exactly, a function $f:[0,T]\mapsto L^p(\Omega)$ (defined on $[0,T]$) belongs to $C_{{\rm weak}}([0,T];L^p(\Omega))$ iff
$f\in L^\infty(0,T;L^p(\Omega))$ and for all $\eta\in L^{p'}(\Omega)$ the map $\tau\mapsto \intO{f(\tau)\eta}$ is continuous on interval $[0,T]$.
For the norms in Bochner spaces we use the function space as full index, as e.g. $\|u\|_{L^{p}(I;L^q(\Omega))}$ or { $\|u\|_{L^p(I;W^{1,q}(\Omega))}$}.  Throughout the paper, the constants are denoted by $C$ and their value may change even in the same formula.}

\section{Various notions of solutions to continuity and pure transport equations} \label{S2}

{ The main results of this paper will largely rely on various notions of (weak) solutions to the
continuity and pure transport equations and their inter-relations. We shall introduce these notions in this section.}

We consider the equations on the time-space cylinder $Q=I\times\Omega$, $\Omega$ a bounded open set in $R^d$, $d\ge 2$, 
and $I=(0,T)$, $T>0$ a time interval. The equations read:
\begin{enumerate}
\item {Continuity equation}
\begin{equation}\label{co1}
{\partial_t\vr+{\rm div}\,(\vr\vu)=0\;\mbox{in $(0,T)\times\Omega$}}
\end{equation}
with initial condition
$$
\vr(0,\cdot)=\vr_0(\cdot)\;\mbox{in $\Omega$}.
$$
\item {Pure transport equation}
\begin{equation}\label{tr1}
{ \partial_t s+\vu\cdot\Grad s=0\;\mbox{in $(0,T)\times\Omega$}}
\end{equation}
with initial condition
$$
s(0,\cdot)=s_0(\cdot)\;\mbox{in $\Omega$}.
$$
\end{enumerate}

We shall consider several different notions of solutions to these equations. 

\begin{df}[Continuity equation]\label{dfco} 
Let 
\begin{equation}\label{covu}
{\vu\in L^{1}(I\times\Omega;R^d)},\;{\rm div}\,  \vu\in L^1(I\times\Omega).
\end{equation}
We say that function 
\begin{equation}\label{cocl1}
\vr\in L^1(I\times\Omega)\;\mbox{such that $\vr\vu\in  L^1(I\times\Omega;R^d)$}
\end{equation}
is\footnote{
In some cases, it would be enough to assume { $\vu\in L_{\rm loc}^{1}(I\times\Omega;R^d)$}, ${\rm div}\,\vu\in L_{\rm loc}^1(I\times\Omega)$ and
condition (\ref{cocl1}) could be weaken to $\vr\in L_{\rm loc}^1(I\times\Omega)$, such that $\vr\vu\in L_{\rm loc}^1(I\times\Omega;R^d)$.
We do not consider this situation since it is irrelevant from the point of view of the present paper.
}:
\begin{enumerate}
\item
{\em Distributional solution} to the continuity equation (\ref{co1})
iff it satisfies (\ref{co1}) in the sense of distributions over the time-space, namely iff
\begin{equation}\label{co1.1}
\int_0^T \int_\Omega (\vr \partial_t \varphi + \vr \vu\cdot \nabla \varphi) \dxdt = 0
\end{equation}
holds for arbitrary  $\varphi \in C^\infty_{\rm c}(I\times \Omega)$. 
\item {\em Weak solution} to the continuity equation (\ref{co1}) iff
\begin{equation}\label{co1.2}
\mbox{equation (\ref{co1.1}) holds with arbitrary  $\varphi \in C^\infty_{\rm c}(I\times \overline\Omega)$.}
\end{equation} 
\item {\em Time integrated distributional solution} to the  continuity equation (\ref{co1}) iff $\vr \in C_{\rm weak}(\overline I;L^1(\Omega))$
 and  it holds
\begin{equation}\label{co1.3}
\int_\Omega(\vr\varphi)(\tau,\cdot)  \dx - \int_\Omega (\vr\varphi) (0,\cdot) \dx =
\int_0^\tau \int_\Omega (\vr \partial_t \varphi + \vr \vu\cdot \nabla \varphi) \dxdt 
\end{equation} 
for any $\varphi \in C^\infty_c(\overline I\times {\Omega})$ and any $\tau \in\overline I$.
\item {\em Time integrated weak solution} to the continuity equation (\ref{co1}) iff $\vr \in C_{\rm weak}(\overline I;L^1(\Omega))$
 and 
\begin{equation}\label{co1.4}
\mbox{equation (\ref{co1.3}) holds with arbitrary  $\varphi \in C^\infty_{\rm c}(\overline I\times \overline\Omega)$ and any $\tau \in\overline I$.}
\end{equation} 
\item {\em Renormalized distributional solution} to the  continuity equation (\ref{co1}) iff in addition to (\ref{co1.1}), 
\begin{equation}\label{rco1.1}
\int_0^T \int_\Omega \Big(b(\vr) \partial_t \varphi + b(\vr) \vu\cdot \nabla \varphi - \big(b'(\vr)\vr -b(\vr)\big) \Div \vu \varphi\Big) \dxdt = 0
\end{equation}
holds with all $\varphi \in C^\infty_{\rm c}(I\times \Omega)$ and all renormalizing functions\footnote{Conditions (\ref{ren}), (\ref{covu}) and (\ref{cocl1}) immediately ensure that $b(\vr)$, $b(\vr)\vu$ and $(\vr b'(\vr)-b(\vr)){\rm div}\,\vu\in L^1(I\times\Omega)$. As will be seen later, in fact $b(\vr) \in C([0,T];L^1(\Omega))$, too.}
\begin{equation}\label{ren}
b \in C^1([0,\infty)),\; b'\in C_c([0,\infty)).
\end{equation}
\item {\em Renormalized weak solution} to the continuity equation (\ref{co1}) iff  in addition to (\ref{co1.2}), 
\begin{equation}\label{rco1.2}
\mbox{equation (\ref{rco1.1}) holds with all $\varphi \in C^\infty_{\rm c}(I\times \overline\Omega)$ and all $b$ in (\ref{ren}).}
\end{equation}
\item {\em Renormalized time integrated distributional solution} to the continuity equation (\ref{co1}) iff
$b(\vr) \in C_{\rm weak}(\overline I;L^1(\Omega))$ and 
 in addition to (\ref{co1.3}), 
\begin{equation}\label{rco1.3}
\int_\Omega (b(\vr)\varphi) (\tau,\cdot) \dx - \int_\Omega (b(\vr)\varphi)(0,\cdot) \dx=
\end{equation}
$$
\int_0^T \int_\Omega \Big(b(\vr) \partial_t \varphi + b(\vr) \vu\cdot \nabla \varphi - \big(b'(\vr)\vr -b(\vr)\big) \Div \vu \varphi\Big) \dxdt
$$
holds with all $\varphi \in C^\infty_{\rm c}(\overline{I}\times \Omega)$, all $\tau\in\overline I$ and all renormalizing functions $b$ in the class (\ref{ren}).
\item {\em Renormalized time integrated weak solution} to the continuity equation (\ref{co1}) iff $b(\vr) \in C_{\rm weak}(\overline I;L^1(\Omega))$and in addition to (\ref{co1.4}), 
\begin{equation}\label{rco1.4}
\begin{aligned}
\mbox{equation (\ref{rco1.3}) holds with all $\varphi \in C^\infty_{\rm c}(\overline I\times \overline\Omega)$,} \\ \mbox{all $\tau\in\overline I$ 
and all $b$ in (\ref{ren}).}
\end{aligned}
\end{equation}
\end{enumerate}
\end{df}

{
Due to the presence of term containing $s{\rm div}\,\vu$ in the weak formulation of the pure transport equation,
the definition of weak solutions/renormalized weak solutions in this case asks for better summability of
the quantity $s$ (compared to the summability required for $\vr$ expressed through assumption (\ref{cocl1}) in the case of the continuity equation). 

%or for the 
%better summability of ${\rm div}\vu$	(compared to the summability required for ${\rm div}\vu$	 in the case of the 
%continuity equation). In contrast with the continuity equation (where the second option is prohibitory from the point 
%of applications in compressibile fluid dynamics), in the case of transport equation, the second option is what one 
%should do having in mind applications in compressible fluid dynamics.

\begin{df}[Pure transport equation]\label{dftr}
Let $\vu$ satisfy (\ref{covu}).
We say that function 
\begin{equation}\label{trcl1}
s\in L^1(I\times\Omega)\;\mbox{such that $s\vu$ and $s{\rm div}\,\vu\in  L^1(I\times\Omega)$}
\end{equation}
is\footnote{
In some cases, it would be enough to assume { $\vu\in L_{\rm loc}^{1}(I\times\Omega;R^d)$}, ${\rm div}\,\vu\in L_{\rm loc}^1(I\times\Omega)$ and
condition (\ref{trcl1}) could be weaken to $s\in L_{\rm loc}^1(I\times\Omega)$, such that $s\vu, s {\rm div}\,\vu\in L_{\rm loc}^1(I\times\Omega)$.
We do not consider this situation since it is irrelevant from the point of view of the present paper.
}:
\begin{enumerate}
\item
{\em Distributional solution} to the pure transport equation (\ref{tr1})
iff it satisfies (\ref{tr1}) in the sense of distributions over the time-space, namely iff
\begin{equation}\label{tr1.1}
\int_0^T \int_\Omega (s \partial_t \varphi + s \vu\cdot \nabla \varphi+ s{\rm div}\,\vu \varphi) \dxdt = 0
\end{equation}
holds for arbitrary  $\varphi \in C^\infty_{\rm c}(I\times \Omega)$. 
\item {\em Weak solution} to the pure transport equation (\ref{tr1}) iff
\begin{equation}\label{tr1.2}
\mbox{equation (\ref{tr1.1}) holds with arbitrary  $\varphi \in C^\infty_{\rm c}(I\times \overline\Omega)$.}
\end{equation} 
\item {\em Time integrated distributional solution} to the pure transport equation (\ref{tr1}) iff $s \in C_{\rm weak}(\overline I;L^1(\Omega))$
 and  it holds
\begin{equation}\label{tr1.3}
\int_\Omega(s\varphi)(\tau,\cdot)  \dx - \int_\Omega (s\varphi) (0,\cdot) \dx =
\int_0^\tau \int_\Omega (s \partial_t \varphi + s \vu\cdot \nabla \varphi) \dxdt 
\end{equation} 
for any $\varphi \in C^\infty_c(\overline I\times {\Omega})$ and any $\tau \in\overline I$.
\item {\em Time integrated weak solution} to the pure transport equation (\ref{tr1}) iff $s \in C_{\rm weak}(\overline I;L^1(\Omega))$
 and 
\begin{equation}\label{tr1.4}
\mbox{equation (\ref{tr1.3}) holds with arbitrary  $\varphi \in C^\infty_{\rm c}(\overline I\times \overline\Omega)$ and any $\tau \in\overline I$.}
\end{equation} 
\item {\em Renormalized distributional solution} to the pure transport equation (\ref{tr1}) iff in addition to (\ref{tr1.1}), 
\begin{equation}\label{rtr1.1}
\int_0^T \int_\Omega \Big(b(s) \partial_t \varphi + b(s) \vu\cdot \nabla \varphi + b(s) \Div \vu \varphi\Big) \dxdt = 0
\end{equation}
holds with all $\varphi \in C^\infty_{\rm c}(I\times \Omega)$ and all renormalizing functions $b$ belonging to class (\ref{ren}).
\item {\em Renormalized weak solution} to the pure transport equation (\ref{tr1}) iff  in addition to (\ref{tr1.2}), 
\begin{equation}\label{rtr1.2}
\mbox{equation (\ref{rtr1.1}) holds with all $\varphi \in C^\infty_{\rm c}(I\times \overline\Omega)$ and all $b$ in (\ref{ren}).}
\end{equation}
\item {\em Renormalized time integrated distributional solution} to the pure transport equation (\ref{tr1}) iff
$b(\vr) \in C_{\rm weak}(\overline I;L^1(\Omega))$ and 
 in addition to (\ref{tr1.3}), 
\begin{equation}\label{rtr1.3}
\int_\Omega (b(s)\varphi) (\tau,\cdot) \dx - \int_\Omega (b(s)\varphi)(0,\cdot) \dx=
\end{equation}
$$
\int_0^T \int_\Omega \Big(b(s) \partial_t \varphi + b(s) \vu\cdot \nabla \varphi + b(s) \Div \vu \varphi\Big) \dxdt 
$$
holds with all $\varphi \in C^\infty_{\rm c}(\overline{I}\times \Omega)$, all $\tau\in\overline I$ and all renormalizing functions $b$ 
in the class (\ref{ren}).
\item {\em Renormalized time integrated weak solution} to the pure transport equation (\ref{tr1}) iff $b(s) \in C_{\rm weak}(\overline I;L^1(\Omega))$ and in addition to (\ref{co1.4}), 
\begin{equation}\label{rtr1.4}
\begin{aligned}
\mbox{equation (\ref{rtr1.3}) holds with all $\varphi \in C^\infty_{\rm c}(\overline I\times \overline\Omega)$,} \\
\mbox{all $\tau\in\overline I$ 
and all $b$ in (\ref{ren}).}
\end{aligned}
\end{equation}
\end{enumerate}
\end{df}
}

\section{Main results}\label{M}

The primal goal of this paper is the investigation of the vacuum formation in the weak solution (density, velocity)---$(\vr,\vu)$---in the compressible Navier--Stokes equations. We shall prove that the volume of eventual vacuum set evolutes continuously in time and, more surprisingly, if there is 
no vacuum  at time $0$ and there is a vacuum of non-zero measure at some time $\tau\in (0,T)$, 
then any distributional solution $R$ (with certain reasonable summability properties) to
the continuity equation (with the same transporting velocity $\vu$)---if it exists---admits at time $\tau$
a larger vacuum set $\{x\in\Omega|R(\tau)=0\}$ than the vacuum set of $\vr$. This property does not imply absence of vacuum but pleads in favour of the 
sparseness of the event
of creation of vacuum in compressible flows.

All these properties rely exclusively on the properties of continuity and transport equations. We shall therefore
formulate them as such in Subsection \ref{M1}, postponing the formulation in the context of Navier--Stokes equations 
to Subsection \ref{M3}.

The proofs of results in Subsection \ref{M1} rely {essentially} on the properties and inter-relation of various types of weak/renormalized solutions
to the continuity and transport equations and their combinations, which are of independent interest.  Bits of pieces of some of these results 
(all of them having ground in the seminal work by DiPerna and Lions \cite{DL}) 
are non systematically spread through the mathematical literature in several (mostly recent)  
papers dealing with the existence of weak solutions to the compressible Navier--Stokes equations and compressible mixtures as auxiliary tools,
\cite{FeBook}, \cite{FeNo_book}, \cite{FeNoPe}, \cite{NoPo}, \cite{NoSt}, \cite{VWY}. We will state in Subsection \ref{M2} those of these properties needed
in this paper in their full generality and provide their detailed proofs.

\subsection{Properties of vacuum in the weak solution of the continuity equation} \label{M1}

The first theorem dealing with vacuum sets in the continuity equation reads.

\begin{thm} \label{t2} Let $\Omega\subset R^d$ be a bounded domain.
Let $1\le q,p\le \infty$ and $\vu \in L^{p}(0,T; W^{1,q}(\Omega;R^d))$. Let 
\begin{equation}\label{gamma+}
 0\le\vr \in C_{{\rm weak}}(\overline I;L^\gamma(\Omega)),\; \gamma>1
\end{equation}
be a renormalized time integrated  weak solution to the continuity equation (\ref{co1}) with
transporting velocity $\vu$ (i.e. it belongs to class (\ref{cocl1}), satisfies equation (\ref{co1.4}) and  equation (\ref{rco1.4}) with the renormalizing functions 
$b$ from (\ref{ren})).

  Then the map $t\mapsto s_\vr(t,\cdot):= { 1_{\{x\in \Omega|\vr(t,x)=0\}}(\cdot)}$
belongs to $C([0,T];L^r(\Omega))$ with any $1\le r<\infty$ and it is a time integrated renormalized weak solution of the pure transport equation (\ref{tr1}) with transporting velocity $\vu$. In particular,
\begin{equation} \label{2.8}
{ |\{x\in \Omega| \vr(t,x) = 0\}|_d \in C([0,T]).}
\end{equation}
In the above $|A|_d$ denotes the $d$-dimensional Lebesgue measure of the set A.
\end{thm}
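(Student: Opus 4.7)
The plan is to realize $s_\vr(t,x)=1_{\{\vr(t,x)=0\}}$ as the pointwise limit of $b_k(\vr)$ for a carefully chosen family of renormalizers, then pass to the limit in the renormalized time integrated weak formulation (\ref{rco1.4}) of the continuity equation. To this end I fix $\eta\in C^1([0,\infty))$ with $\eta(0)=1$, $\eta\equiv 0$ on $[1,\infty)$, $0\le\eta\le 1$, and set $b_k(s):=\eta(ks)$. Each $b_k$ belongs to the class (\ref{ren}) because $b_k'(s)=k\eta'(ks)$ is continuous on $[0,\infty)$ with support inside $[0,1/k]$. The uniform bounds $|b_k(s)|\le 1$ and $|sb_k'(s)|\le \|t\eta'(t)\|_{L^\infty(0,\infty)}$ hold, together with the pointwise limits $b_k(s)\to 1_{\{s=0\}}$ and $sb_k'(s)-b_k(s)\to -1_{\{s=0\}}$ on $[0,\infty)$.

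Next I substitute $b=b_k$ in (\ref{rco1.4}) and test against an arbitrary $\varphi\in C^\infty_c(\overline I\times\overline\Omega)$. Since the cylinder is bounded and $\vu\in L^p(I;W^{1,q}(\Omega))$, both $\vu$ and $\Div\vu$ belong to $L^1(I\times\Omega)$, so I can apply Lebesgue dominated convergence to every term; the pointwise limits identified above then deliver
\begin{equation*}
\int_\Omega(s_\vr\varphi)(\tau)\dx-\int_\Omega(s_\vr\varphi)(0)\dx=\int_0^\tau\!\!\int_\Omega\!\big(s_\vr\partial_t\varphi+s_\vr\vu\cdot\nabla\varphi+s_\vr\Div\vu\,\varphi\big)\dxdt,
\end{equation*}
which is the time integrated weak formulation of the pure transport equation for $s_\vr$ with the trivial renormalizer $b(s)=s$. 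To promote this to the full renormalized identity (\ref{rtr1.4}) for arbitrary $b$ in (\ref{ren}), I exploit the two-valuedness $s_\vr\in\{0,1\}$: for any such $b$ one has $b(s_\vr)=b(0)+(b(1)-b(0))s_\vr$, so by linearity the renormalized equation decomposes into the identity just derived and the analogous identity with $s$ replaced by the constant $1$, which is exactly (\ref{rco1.4}) applied to $\vr$ with the admissible choice $b\equiv 1$.

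For the strong continuity statement, I first specialize the identity above to $\varphi(t,x)=\psi(x)\in C^\infty(\overline\Omega)$; the right-hand side is then an absolutely continuous function of $\tau$, so $\tau\mapsto\int_\Omega s_\vr(\tau)\psi\dx$ is continuous, and density of $C^\infty(\overline\Omega)$ in $L^2(\Omega)$ together with $\|s_\vr\|_{L^\infty}\le 1$ upgrade this to $s_\vr\in C_{\rm weak}(\overline I;L^2(\Omega))$. Choosing $\varphi\equiv 1$ yields
\begin{equation*}
\int_\Omega s_\vr(\tau)\dx=\int_\Omega s_\vr(0)\dx+\int_0^\tau\!\!\int_\Omega s_\vr\Div\vu\dxdt,
\end{equation*}
whose right-hand side is continuous in $\tau$ (the integrand belongs to $L^1(I\times\Omega)$); this is exactly (\ref{2.8}) and, since $s_\vr\in\{0,1\}$, also the continuity of $\|s_\vr(\tau)\|_{L^2}^2=\|s_\vr(\tau)\|_{L^1}$. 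Uniform convexity of $L^2$ then turns weak convergence plus norm convergence into strong convergence, so $s_\vr\in C(\overline I;L^2(\Omega))$. Finally, for any $1\le r<\infty$ the two-valuedness gives $|s_\vr(\tau_n)-s_\vr(\tau)|^r=|s_\vr(\tau_n)-s_\vr(\tau)|$, hence $\|s_\vr(\tau_n)-s_\vr(\tau)\|_{L^r}^r=\|s_\vr(\tau_n)-s_\vr(\tau)\|_{L^1}\le|\Omega|^{1/2}\|s_\vr(\tau_n)-s_\vr(\tau)\|_{L^2}\to 0$.

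The most delicate point is the choice of the approximating family $b_k$: one needs the pointwise convergences $b_k(\vr)\to s_\vr$ and $\vr b_k'(\vr)-b_k(\vr)\to -s_\vr$ together with uniform bounds compatible with the sole integrability of $\vu$ and $\Div\vu$, and the scaling $b_k(s)=\eta(ks)$ delivers exactly this. The subsequent passage from weak to strong continuity is a standard combination of Radon--Riesz in $L^2$ with the purely algebraic identity valid for $\{0,1\}$-valued functions.
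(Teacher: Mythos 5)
Your proof is correct, and although the overall strategy coincides with the paper's (approximate $s_\vr$ by renormalizers of $\vr$ and pass to the limit, then upgrade from weak to strong time continuity), you diverge at three technically meaningful points, each time in a somewhat cleaner direction. First, your approximating family $b_k(s)=\eta(ks)$ with $\eta'$ compactly supported belongs \emph{exactly} to the class (\ref{ren}), whereas the paper uses $b_\delta(\vr)=\delta/(\delta+\vr)$, whose derivative is not compactly supported, so that an extra inner approximation is needed (the paper acknowledges this in a footnote). Second, to promote the plain transport identity for $s_\vr$ to the full renormalized version, you exploit the purely algebraic fact that for a $\{0,1\}$-valued function $b(s_\vr)=b(0)+(b(1)-b(0))s_\vr$, reducing everything to the $b\equiv 1$ instance of (\ref{rco1.4}) plus the identity already obtained; the paper instead repeats the whole limit argument with the composed renormalizers $b(\delta/(\delta+\vr))$, which again fall slightly outside (\ref{ren}) and require another approximation. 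Third, for the strong continuity you combine the weak continuity of $s_\vr$ in $L^2$, the continuity of $\|s_\vr(\tau)\|_{L^2}^2=\|s_\vr(\tau)\|_{L^1}$ coming from the $\varphi\equiv 1$ identity, Radon--Riesz, and the elementary fact $|a|^r=|a|$ for $a\in\{-1,0,1\}$; the paper instead appeals to the general Lemma \ref{aux4}, which is heavier machinery (the Friedrichs-commutator argument on $[\T_k(\vr)]^2_\ep$) than this specific two-valued situation requires. Your variant is thus more elementary and self-contained at each of these junctures; what the paper's route buys is that its ingredients (Lemma \ref{aux4}, the composition $b\circ b_\delta$) are reusable in the more general Theorems \ref{th1}--\ref{th5}, so the paper avoids introducing a special-purpose trick. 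One small caution is that the integrands in the dominated-convergence step are controlled only because $\vu,\ \Div\vu\in L^1(I\times\Omega)$ and the boundary terms in the renormalized time-integrated form are pointwise bounded; you state this implicitly, and it would be worth spelling out, in particular that the a.e.\ pointwise convergence $b_k(\vr(\tau,\cdot))\to s_\vr(\tau,\cdot)$ holds at \emph{every} fixed $\tau$ because $\vr(\tau,\cdot)$ is a well-defined element of $L^\gamma(\Omega)$ by the $C_{\rm weak}$ hypothesis.
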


The second theorem about the vacuum issue reads.

\begin{thm} \label{t3}
Let $\Omega\subset R^d$ be a bounded Lipschitz domain.
Let
\begin{equation}\label{qpab}
1\le q,p,\alpha,\beta\le \infty,\; (q,\beta)\neq (1,\infty),\;\frac 1\beta+\frac 1q\le1,\;\frac 1\alpha+\frac 1p\le 1.
\end{equation}
Let $\vr$ from {class} (\ref{gamma+})
be a renormalized time integrated weak solution to the continuity equation (\ref{co1}) with
transporting velocity $\vu \in L^{p}(0,T; W^{1,q}_0(\Omega;R^d))$ (i.e. it belongs to class (\ref{cocl1}), satisfies equation (\ref{co1.1}) and
equation (\ref{rco1.1}) with renormalizing functions $b$ from (\ref{ren})).

Let
\begin{equation}\label{clR-} 
0\le R\in  L^\infty(0,T;L^{\widetilde\gamma}(\Omega))\cap L^\alpha(0,T; L^\beta(\Omega)),\; \widetilde\gamma>1
\end{equation}  
be a distributional solution  to the continuity equation (\ref{co1}) with the same transporting velocity $\vu$.

Then 
\begin{enumerate}
\item 
Function $R$ belongs to
\begin{equation}\label{clR}
R\in C_{\rm weak}([0,T];L^{\widetilde\gamma}(\Omega))\cap C([0,T]; L^r(\Omega)),\; 1\le r<\widetilde\gamma
\end{equation}
and it
is a renormalized time integrated weak solution of the continuity equation (\ref{co1}).
\item {The map $t\mapsto (s_\vr R)(t)$} belongs to $C([0,T];L^r(\Omega))$ with any $1\le r<\widetilde\gamma$ and it is a renormalized time integrated weak solution
of the continuity equation (\ref{co1}) (with the same transporting velocity). In particular,
\begin{equation} \label{clRR}
{\int_{\Omega} (s_\vr R)(t,\cdot) \dx = \int_{\Omega} (s_\vr R)(0,\cdot) \dx}
\end{equation}
for all $t \in [0,T]$. 
\item If further $\vr(0,\cdot)>0$ a.e. in $\Omega$, then, up to sets of $d$-dimensional Lebesgue measure zero, for all $t\in (0,T]$
$$
\{x\in \Omega| \vr(t,x) =0\} \subset \{x\in \Omega| R(t,x) =0\}.
$$
\end{enumerate}
\end{thm}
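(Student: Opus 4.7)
My plan is to handle the three claims in order, with the central idea being to promote $R$ to a renormalized time integrated weak solution and then to combine it with $s_\vr$ via the ``transport $\times$ continuity = continuity'' product rule encoded in Theorem \ref{th5}.

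For Part 1, the constraints (\ref{qpab}) coupled with $R\in L^\infty(0,T;L^{\widetilde\gamma}(\Omega))\cap L^\alpha(0,T;L^\beta(\Omega))$ and $\vu\in L^p(0,T;W^{1,q}_0(\Omega;R^d))$ are tailored to apply Theorems \ref{th3} and \ref{th4}, which upgrade a distributional solution to a renormalized weak solution up to the boundary, followed by Theorem \ref{th1} which produces its time integrated counterpart. This delivers $R\in C_{\rm weak}(\overline I;L^{\widetilde\gamma}(\Omega))$ satisfying the renormalized time integrated weak formulation. To lift weak continuity to strong continuity in $L^r$ for $1\le r<\widetilde\gamma$, I select a sequence of renormalizing functions $b_k$ in class (\ref{ren}) approximating $z\mapsto z^r$ from below. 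The renormalized formulation delivers continuity of $t\mapsto \int_\Omega b_k(R(t))\,\dx$; a monotone convergence argument together with the uniform $L^{\widetilde\gamma}$ bound then yields continuity of $t\mapsto \|R(t)\|_{L^r(\Omega)}$. Combined with weak continuity in $L^r$ (inherited from the weak continuity in $L^{\widetilde\gamma}$ via the uniform $L^{\widetilde\gamma}$ bound) and uniform convexity of $L^r$, this gives strong continuity.

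For Part 2, I apply Theorem \ref{t2} to conclude that $s_\vr=1_{\{\vr=0\}}$ is a renormalized time integrated weak solution of the pure transport equation (\ref{tr1}) with transporting velocity $\vu$. Theorem \ref{th5}, which encapsulates the formal computation
\begin{equation*}
\partial_t(s_\vr R) + \Div(s_\vr R\vu) = s_\vr\bigl[\partial_t R + \Div(R\vu)\bigr] + R\bigl[\partial_t s_\vr + \vu\cdot\Grad s_\vr\bigr] = 0,
\end{equation*}
then yields that $s_\vr R$ is a renormalized time integrated weak solution of the continuity equation (\ref{co1}) with the same transporting velocity. Summability is not an obstacle since $s_\vr\in L^\infty$, so the product inherits the $L^\infty(0,T;L^{\widetilde\gamma}(\Omega))\cap L^\alpha(0,T;L^\beta(\Omega))$ integrability of $R$. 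Strong continuity of $s_\vr R$ in $L^r(\Omega)$ for $r<\widetilde\gamma$ follows from the same renormalization-plus-uniform-convexity argument as in Part 1. To derive the integral identity (\ref{clRR}), I test the time integrated weak formulation with $\varphi\equiv 1$; this is legitimate because the boundary flux vanishes thanks to $\vu\in W^{1,q}_0(\Omega;R^d)$, so the formulation (\ref{co1.4}) with test functions not vanishing on $\partial\Omega$ applies.

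For Part 3, the hypothesis $\vr(0,\cdot)>0$ almost everywhere gives $s_\vr(0,\cdot)\equiv 0$, whence $\int_\Omega (s_\vr R)(0,\cdot)\,\dx=0$. The conservation (\ref{clRR}) then forces $\int_\Omega (s_\vr R)(t,\cdot)\,\dx=0$ for every $t\in[0,T]$. Since $s_\vr\ge 0$ and $R\ge 0$, this means $s_\vr(t,\cdot)R(t,\cdot)=0$ almost everywhere in $\Omega$, which is exactly the claimed inclusion $\{x\in\Omega\mid\vr(t,x)=0\}\subset\{x\in\Omega\mid R(t,x)=0\}$ modulo a set of Lebesgue measure zero. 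The step I expect to be the main obstacle is the invocation of Theorem \ref{th5} in Part 2: one must verify that the pair $(s_\vr,R)$ fits the asymmetric summability scheme under which that theorem is proved, and that the weak-up-to-the-boundary character of both factors is preserved in the product. A secondary subtlety is the admissibility of the constant test function, which is where the no-slip condition $\vu\in W^{1,q}_0$ becomes indispensable.
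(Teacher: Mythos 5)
Your proposal follows essentially the same route as the paper: Part 1 by combining Theorem \ref{th3} (distributional to renormalized weak) and Theorem \ref{th1} (time integration, which already includes Item 3 giving $C(\overline I;L^r(\Omega))$), Part 2 via Theorem \ref{t2} to establish $s_\vr$ as a renormalized time integrated weak solution of the pure transport equation and then Theorem \ref{th5}, Statement 2 for the product, and Part 3 from the conservation identity (\ref{clRR}) plus nonnegativity. Minor imprecisions aside (Theorem \ref{th4} is not needed for $R$, only Theorem \ref{th3}; and the extra renormalization-plus-uniform-convexity argument for strong continuity is redundant since Theorem \ref{th1}, Item 3 already provides it), the structure and the key lemmas coincide with the paper's proof.
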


The second theorem has the following immediate consequences:

\begin{cor}\label{cor1}
Let $q$, $p$, $\alpha$, $\beta$ verify conditions (\ref{qpab}) and $\widetilde\gamma,\gamma>1$.
%$1\le q,p,\alpha,\beta\le \infty$, $(\beta, q)\neq (\infty,1)$, $\frac 1\beta+\frac 1q\le 1$, $\frac 1\alpha+\frac 1p\le 1$, 
%$\tilde\gamma>1$, $\frac 1{\tilde\gamma}+\frac 1q\le 1+\frac 1d$.
Let $\Omega$, $\vr$, $\vu$ verify assumptions of Theorem \ref{t3}, where $\vr(0,x)>0$. (In particular, $\vr$ is a renormalized time integrated weak  solution of the continuity equation 
(\ref{co1}) with transporting velocity $\vu$.)

Let $\tau\in (0,T)$. Suppose that continuity equation (\ref{co1})
 with transporting velocity  $\vu$
admits at least one  distributional solution $R$ 
 belonging to class  (\ref{clR-})
%\begin{equation}\label{clR}
%R\in C_{\rm weak}([0,T];L^{\tilde\gamma}(\Omega))\cap L^\alpha(0,T;L^\beta(\Omega))
%\end{equation}
 which does not admit in $\Omega$ a vacuum at time $\tau$, i.e.
$R(\tau)>0$ a.e. in $\Omega$.

Then $\vr$ does not admit a vacuum at time $\tau$, i.e. 
$$
 |\{x\in \Omega| \vr(\tau,x) =0\}|_d=0. 
$$ 
\end{cor}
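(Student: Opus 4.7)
The plan is to verify that every hypothesis of Theorem \ref{t3} is already built into the assumptions of the corollary, and then to read off the conclusion directly. The conditions on $\Omega$, on the exponents $(q,p,\alpha,\beta)$, on $\vu\in L^{p}(0,T;W^{1,q}_0(\Omega;R^d))$, on the renormalized time integrated weak solution $\vr$ of class (\ref{gamma+}), on the distributional solution $R$ of class (\ref{clR-}), and on the initial positivity $\vr(0,\cdot)>0$ a.e.\ are exactly those of Theorem \ref{t3}. Consequently I would apply that theorem without modification.

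The shortest route uses part 3 of Theorem \ref{t3} directly: for the $\tau\in(0,T)$ singled out in the statement of the corollary, one has
\[
\{x\in\Omega|\,\vr(\tau,x)=0\}\subset \{x\in\Omega|\,R(\tau,x)=0\}
\]
up to a $d$-dimensional Lebesgue null set. Since by hypothesis $R(\tau,\cdot)>0$ a.e.\ in $\Omega$, the set on the right-hand side has Lebesgue measure zero, whence so does the set on the left, which is precisely the assertion $|\{x\in\Omega|\,\vr(\tau,x)=0\}|_d=0$.

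Alternatively, and more self-containedly, I would read the same conclusion off part 2 of Theorem \ref{t3}. Setting $t=\tau$ in (\ref{clRR}), the positivity $\vr(0,\cdot)>0$ a.e.\ gives $s_\vr(0,\cdot)=0$ a.e., so that the right-hand side of (\ref{clRR}) vanishes. Non-negativity of $s_\vr$ and $R$ then forces $(s_\vr R)(\tau,\cdot)=0$ a.e.\ in $\Omega$, and using $R(\tau,\cdot)>0$ a.e.\ once more yields $s_\vr(\tau,\cdot)=0$ a.e., equivalent to the desired Lebesgue-measure statement.

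No genuine obstacle stands in the way: the corollary is essentially a specialization of Theorem \ref{t3} to the situation where the auxiliary solution $R$ is itself vacuum-free at time $\tau$. All substantive work—the promotion of $R$ to a renormalized time integrated weak solution, the transport identity for the product $s_\vr R$, and the continuity in $t$ of $s_\vr$—was already absorbed in the proof of Theorem \ref{t3}, so here only the two trivial observations that (i) $\vr(0,\cdot)>0$ a.e.\ annihilates $s_\vr(0,\cdot)$ and (ii) $R(\tau,\cdot)>0$ a.e.\ annihilates $s_\vr(\tau,\cdot)$ remain to be invoked.
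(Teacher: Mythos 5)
Your proof is correct and follows the same route as the paper, which simply notes that Corollary~\ref{cor1} is an immediate consequence of Theorem~\ref{t3}; your first reading via part 3 of Theorem~\ref{t3} is exactly that, and the alternative via (\ref{clRR}) is just the underlying mechanism spelled out.
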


\begin{cor}\label{cor2}
Let $q,\alpha,\beta,\gamma,\widetilde\gamma$ verify assumptions of Corollary \ref{cor1} with $p=\infty$.

Let $\Omega$, $\vr$, $\vu$ verify assumptions of Corollary \ref{cor1}. (In particular, $0\le \vr$ is a renormalized time integrated weak  solution of the continuity equation (\ref{co1})
with transporting velocity $\vu$ and $\vr(0,x)>0$.) We assume that $\vu$ is time independent, i.e. $\vu=\vu(x)$, { $\vu\in W^{1,q}_0(\Omega;R^d)$}.

 Suppose that continuity equation (\ref{co1})
 with transporting velocity  $\vu$
admits at least one (local in time)  distributional solution $R$ on $(0,T')\times\Omega$ with some $T'>0$ 
 belonging to class (\ref{clR-})$_{T=T'}$ 
 which does not admit in $\Omega$ a vacuum at time $\tau\in (0,T')$, i.e. there exists $\tau\in (0,T')$ such that
$R(\tau)>0$ a.e. in $\Omega$.

Then $\vr$ does not admit a vacuum at any time in $[0,T]$, i.e. 
$$
\forall t\in [0,T],\; |\{x\in \Omega| \vr(t,x) =0\}|_d=0. 
$$ 
\end{cor}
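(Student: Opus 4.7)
The plan is to reduce Corollary \ref{cor2} to Corollary \ref{cor1} via an iterative time-translation argument that exploits the autonomy of the velocity field. The key observation is that since $\vu=\vu(x)$, the notions of distributional, weak, and renormalized (time integrated) weak solution to the continuity equation (\ref{co1}) are invariant under translation in $t$: if $f(t,x)$ is such a solution on $(a,b)\times\Omega$ with velocity $\vu$, then $f_h(t,x):=f(t+h,x)$ is a solution of the same kind on $(a-h,b-h)\times\Omega$. This follows directly from the definitions in Section \ref{S2} by the change of variables $t\mapsto t+h$ in the test functions, and restriction to sub-intervals preserves each class as well. Translating $R$ backward if necessary, I may assume without loss of generality that $\tau\in(0,T')\cap(0,T)$.

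Step 1 (propagation up to time $\tau$). Fix $t_1\in(0,\tau]$ and define
\[
R_{t_1}(t,x):=R(t+\tau-t_1,x),\qquad t\in[0,\,T'-\tau+t_1].
\]
By time-independence of $\vu$, $R_{t_1}$ is a distributional solution of (\ref{co1}) with velocity $\vu$ on its interval of definition, it still belongs to the class (\ref{clR-}) (with horizon $T'-\tau+t_1$), and $R_{t_1}(t_1,\cdot)=R(\tau,\cdot)>0$ a.e.\ in $\Omega$. Setting $T^\ast:=\min(T,\,T'-\tau+t_1)$, one has $T^\ast>t_1$ since $T'>\tau$ and $T>t_1$. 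Applying Corollary \ref{cor1} to the restriction of $\vr$ to $[0,T^\ast]$ together with $R_{t_1}$ restricted to the same interval yields $\vr(t_1,\cdot)>0$ a.e.\ in $\Omega$. Combined with the hypothesis $\vr(0,\cdot)>0$, this shows $\vr(t,\cdot)>0$ a.e.\ for every $t\in[0,\tau]$.

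Step 2 (iteration). I prove by induction on $k\ge 1$ that $\vr(t,\cdot)>0$ a.e.\ for every $t\in[0,\min(k\tau,T)]$. The case $k=1$ is Step 1. Assuming the claim for some $k$ with $k\tau<T$, set $\vr_k(t,x):=\vr(t+k\tau,x)$ on $[0,T-k\tau]$. By the autonomy remark, $\vr_k$ is a renormalized time integrated weak solution of (\ref{co1}) on this interval with velocity $\vu$; by the inductive hypothesis and the weak continuity of $\vr$ in time, $\vr_k(0,\cdot)=\vr(k\tau,\cdot)>0$ a.e. Applying Step 1 to the pair $(\vr_k,R)$ (now on the time horizon $T-k\tau$) gives $\vr_k(t,\cdot)>0$ a.e.\ for $t\in[0,\min(\tau,\,T-k\tau)]$, i.e., $\vr(s,\cdot)>0$ a.e.\ for $s\in[k\tau,\min((k+1)\tau,T)]$, closing the induction. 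After at most $\lceil T/\tau\rceil$ iterations the whole interval $[0,T]$ is covered.

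The main obstacle is purely bookkeeping: one has to verify that the translation $t\mapsto t+h$ preserves the renormalized time integrated weak identity (\ref{rco1.4}) for $\vr$ (at every intermediate instant) and the distributional identity (\ref{co1.1}) for $R$, as well as the $C_{\rm weak}$ regularity in time. This reduces, by subtracting the integrated identity at two consecutive times and substituting in the integrals, to the original formulation on a shorter interval; it is essential here that $\vu(x)$ carries no $t$-dependence, so that the convective term transforms correctly under translation. The continuity statement $|\{x\in\Omega|\vr(t,x)=0\}|_d\in C([0,T])$ from Theorem \ref{t2} can additionally be invoked to pass cleanly to closed endpoints.
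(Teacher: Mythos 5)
Your argument is correct and follows essentially the same strategy as the paper: exploit the autonomy of $\vu$ to translate a solution in time, then invoke the conservation identity (\ref{clRR}) (equivalently, Corollary \ref{cor1}, which is derived from it) to propagate positivity forward by increments of length $\tau$. The only cosmetic difference is in how the iteration is organised — you translate $\vr$ to reset the initial time to $k\tau$ and then reapply Corollary \ref{cor1}, whereas the paper keeps $\vr$ fixed and instead translates $R$ and uses (\ref{clRR}) with a shifted left endpoint — but these are equivalent once one notes that the integrated weak formulation between times $\tau_1$ and $\tau_2$ follows by subtracting two instances of (\ref{co1.4})/(\ref{rco1.4}), which is exactly the ``bookkeeping'' you flag.
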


\begin{rmk}\label{rmkd0}
\begin{enumerate}
\item
In practice, if $\vu \in L^{p}(0,T; W^{1,q}(\Omega;R^d)),$ condition (\ref{cocl1}) in Theorems \ref{t2} and \ref{t3} is ensured by assumption
\begin{equation}\label{gamma} 
1<\gamma\le \infty,\;\frac 1\gamma+\frac 1 q\le 1+\frac 1d. 
\end{equation}
Alternatively, condition (\ref{cocl1}) can be achieved by requiring  $\vu \in L^{p}(0,T;$ $ W^{1,q}(\Omega;R^d)),$ $\vr\in L^\alpha(0,T;L^\beta(\Omega))$,
where $p,q,\alpha,\beta$ verifies (\ref{qpab}). In the theory of weak solutions to compressible Navier--Stokes equations, the former setting provides 
stronger results, cf. Section \ref{M3}.
\item Condition $\vu|_{I\times\partial\Omega}=\vc{0}$ in  Theorem \ref{t3} and Corollaries \ref{cor1}, \ref{cor2} can be replaced by 
$\vu\cdot\vc n|_{I\times\partial\Omega}=0$.
\item  We notice that Theorem \ref{t2} holds independently of the boundary condition imposed on $\vu$ at the boundary (since it deals
with weak solutions in the sense of Definition \ref{dfco}. This is not the case of
Theorems \ref{t3} and Corollaries \ref{cor1}, \ref{cor2}. Nevertheless, they continue to hold
if we replace $W_0^{1,q}(\Omega)$ by $W^{1,q}(\Omega)$ provided we suppose that
$R$ is { a renormalized time integrated {\rm weak} solution (instead of  a renormalized time integrated {\rm distributional} solution).}
Anyway, however, in all these cases the condition $\vr\vu\cdot\vc n|_{I\times\partial\Omega}=0$ must always be satisfied
al least in the weak sense; it is implicitly required in the weak formulation of the equation through the fact that the test functions do not
vanish on the boundary.
\end{enumerate}
\end{rmk}

\subsection{Relations between various types of solutions to continuity and pure transport equations}\label{M2}

The proofs of Theorems \ref{t2} and \ref{t3} are based
on the systematic study of relations and properties of the various types of weak solutions to the continuity and pure transport equations
and their inter-relations. In this section we formulate the adequate results. They are, indeed, of independent interest.

\subsubsection{Time integration of renormalized distributional/weak solutions} \label{M2.1}

The main message of this subsection is the  observation that any renormalized distributional (or weak) solution of the continuity equation/pure
transport equation (introduced in Definitions \ref{dfco}--\ref{dftr}) admits---under certain reasonable conditions---a representative that 
is continuous on the time interval $[0,T]$ with values in
$L^1(\Omega)$, and that both continuity/pure transport and renormalized continuity/pure transport equations can be integrated up to the end-points 
of any time interval $[0,\tau]$, $\tau\in [0,T]$.

\begin{thm}[Continuity equation] \label{th1} 
Let $\Omega\subset R^d$, $d\ge 2$ be a bounded domain with Lipschitz boundary. Let { $\vu\in L^{p}(I; W^{1,q}(\Omega;R^d))$}, $1\le p,q\le \infty$.
Suppose that
\begin{equation}\label{t1.1}
0\le \vr\in L^\infty(I;L^\gamma(\Omega)),\;\gamma>1.
\end{equation}
%where $\gamma,q$ satisfy condition (\ref{gamma}).
Then the following statements are true:
\begin{enumerate}
\item If $\vr$ is a {\em renormalized distributional solution} of the continuity equation  with transporting velocity $\vc u$
(i.e. it belongs to class (\ref{cocl1}) and satisfies (\ref{co1.1}), (\ref{rco1.1}) with any renormalizing function $b$ from (\ref{ren})), then
%\begin{equation}\label{t1.2}
%\vr\in C(\overline I; L^r(\Omega))\;\mbox{with any $1\le r<\gamma$}
%\end{equation}
%and 
function $\vr$ and functions $b(\vr)$ with any $b$ from (\ref{ren}) belong to the class (\ref{clR})$_{\widetilde\gamma=\gamma}$
%\begin{equation}\label{gamma+}
% C_{\rm weak}(\overline I; L^\gamma(\Omega)), \; gamma>1
%\end{equation}
%and
%\begin{equation}\label{gamma++}
% C(\overline I; L^r(\Omega)), \; \forall r\in [1,\gamma),
%\end{equation}
 and $\vr$ is a {\em renormalized time integrated distributional solution} of the continuity equation  with transporting velocity $\vc u$ (i.e. it belongs to class (\ref{cocl1})  
and  satisfies  identities (\ref{co1.3}) and (\ref{rco1.3}) with any renormalizing function
$b$ from (\ref{ren})).
\item If $\vr$ is a {\em renormalized weak solution} of the continuity equation with transporting velocity $\vc u$ (i.e. it belongs to class (\ref{cocl1}) and it satisfies equations
(\ref{co1.2}), (\ref{rco1.2}) with any $b$ from (\ref{ren})), then function $\vr$ and functions $b(\vr)$ with any $b$ from (\ref{ren}) belong to the class
(\ref{clR})$_{\widetilde\gamma=\gamma}$ and  it is a
{\em renormalized time integrated  weak solution} of the continuity equation with transporting velocity $\vc u$ 
(i.e. it belongs to class (\ref{cocl1})  and  it satisfies identities (\ref{co1.4}) and (\ref{rco1.4}) with any renormalizing function
$b$ in class (\ref{ren})).
\item Particularly, in both cases, $\vr \in C(\overline{I};L^r(\Omega))$, $1\leq r<\gamma$.
\end{enumerate}
\end{thm}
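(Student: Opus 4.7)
The plan is to establish, in order, three properties: (A) weak continuity in time of $b(\vr)$ and of $\vr$, (B) the time-integrated renormalized identities (\ref{co1.3})--(\ref{rco1.4}), and (C) strong continuity of $\vr$ in $L^r(\Omega)$ for $1\le r<\gamma$. Steps (A)--(B) yield Items 1--2, and (C) yields Item 3.

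For (A), note that $b'\in C_{\rm c}([0,\infty))$ makes $b$ bounded on $[0,\infty)$, hence $b(\vr)\in L^\infty(I\times\Omega)$. The identity (\ref{rco1.1}) reads $\partial_t b(\vr) = -\Div(b(\vr)\vu)+(b(\vr)-b'(\vr)\vr)\Div\vu$ in $\mathcal{D}'(I\times\Omega)$, whose right-hand side belongs to $L^1(I;W^{-1,\sigma}_{\rm loc}(\Omega))$ for some $\sigma>1$. Consequently, for every $\phi\in C^\infty_{\rm c}(\Omega)$ the map $t\mapsto\intO{b(\vr(t))\phi}$ has a continuous representative on $\overline I$; combined with the $L^\infty$ bound on $b(\vr)$ and density of smooth functions in $L^{p'}(\Omega)$, this promotes to $b(\vr)\in C_{{\rm weak}}(\overline I;L^p(\Omega))$ for every $p\in[1,\infty)$. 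For $\vr$ itself, (\ref{co1.1}) together with (\ref{cocl1}) yields $\partial_t\vr\in L^1(I;W^{-1,1}_{\rm loc}(\Omega))$, hence $\vr\in C_{{\rm weak}}(\overline I;L^\gamma(\Omega))$ by the same argument. The reasoning carries over to Item 2 by replacing $C^\infty_{\rm c}(\Omega)$ with $C^\infty_{\rm c}(\overline\Omega)$.

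For (B), I insert $\varphi(t,x)=\psi_\epsilon(t)\phi(x)$ into (\ref{co1.1}) or (\ref{rco1.1}), where $\psi_\epsilon$ is a smooth approximation of $\chi_{[0,\tau]}$. As $\epsilon\downarrow 0$, $\psi'_\epsilon$ tends to $\delta_0-\delta_\tau$ as a measure; the weak continuity from (A) permits passage to the limit and produces the boundary terms $\intO{b(\vr(0))\phi}-\intO{b(\vr(\tau))\phi}$. Density of tensor products in $C^\infty_{\rm c}(\overline I\times\Omega)$ (resp.\ $C^\infty_{\rm c}(\overline I\times\overline\Omega)$) then extends the identity to all admissible $\varphi$, yielding (\ref{co1.3})--(\ref{rco1.3}) and their weak counterparts (\ref{co1.4})--(\ref{rco1.4}).

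For (C), I rely on uniform convexity of $L^r(\Omega)$: weak continuity in $L^r$ (implied by (A) together with $r<\gamma$ and boundedness of $\Omega$) plus continuity of $t\mapsto\|\vr(t)\|_{L^r}^r$ forces strong continuity in $L^r$. To establish norm continuity, pick a convex $b_{\delta}\in$ class (\ref{ren}) with $b_{\delta}(s)=s^r$ on $[0,1/\delta]$ and $b_{\delta}$ constant for $s$ large; then $b'_{\delta}(s)s-b_{\delta}(s)\to(r-1)s^r$ monotonically as $\delta\downarrow 0$. In the weak-solution case (Item 2), taking $\phi\equiv 1$ in (\ref{rco1.3}) (admissible since $\overline\Omega$ is compact) gives
\begin{equation*}
\intO{b_{\delta}(\vr(t))}-\intO{b_{\delta}(\vr(0))}=-\int_0^t\intO{(b'_{\delta}(\vr)\vr-b_{\delta}(\vr))\Div\vu}\dt,
\end{equation*}
whose right-hand side is absolutely continuous in $t$ because the integrand is dominated by $C\vr^r|\Div\vu|\in L^1(I\times\Omega)$ (using $\vr\in L^\infty(I;L^\gamma)$, $\Div\vu\in L^p(I;L^q)$ and $r<\gamma$). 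Letting $\delta\downarrow 0$ yields continuity of $t\mapsto\intO{\vr^r(t)}$. For the distributional case (Item 1), the same argument localized to $\phi\in C^\infty_{\rm c}(\Omega')$ with $\Omega'\subset\subset\Omega$ produces strong continuity on every such $\Omega'$, and the tail $\int_{\Omega\setminus\Omega'}\vr^r\dx$ is controlled uniformly in $t$ via H\"older by $\|\vr\|_{L^\infty(I;L^\gamma)}^r|\Omega\setminus\Omega'|^{1-r/\gamma}$, which vanishes as $\Omega'\nearrow\Omega$ thanks to $r<\gamma$. The main obstacle I anticipate is ensuring that $\partial_t b(\vr)$ genuinely has enough negative-Sobolev regularity to admit traces at the endpoints $t=0,T$ intrinsically from a distributional formulation on the open interval $I$; coupled with the truncation $b_{\delta}\to s^r$ (forced by the compact support of $b'$), the entire mechanism depends essentially on the strict inequality $r<\gamma$ together with the uniform $L^\gamma$ bound, which explains the restriction in Item 3.
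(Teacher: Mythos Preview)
Your steps (A) and (B) are essentially correct and parallel the paper's Lemma~\ref{aux3}. The genuine gap is in step (C), and it is \emph{not} the endpoint-trace issue you anticipate.

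Your dominated-convergence argument for the limit $\delta\to 0$ rests on the claim that $\vr^r|\Div\vu|\in L^1(I\times\Omega)$ follows from $\vr\in L^\infty(I;L^\gamma)$, $\Div\vu\in L^p(I;L^q)$ and $r<\gamma$. This is false in general: by H\"older, you need $\vr^r\in L^{p'}(I;L^{q'})$, i.e.\ $rq'\le\gamma$, which is strictly stronger than $r<\gamma$ whenever $q<\infty$. For small $q$ (e.g.\ $q=1$) there may be no admissible $r>1$ at all. The same integrability obstruction hits the term $\vr^r\vu\cdot\nabla\phi$ in your localized argument for Item~1, so the distributional case suffers doubly: you neither get the domination nor, even granting it, does continuity of $t\mapsto\int\vr^r(t)\phi$ for compactly supported $\phi$ yield the norm continuity $t\mapsto\|\vr(t)\|_{L^r(\Omega')}$ needed for the uniform-convexity trick.

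The paper circumvents both issues by working throughout with a \emph{bounded} truncation $T_k(\vr)$ rather than with $\vr^r$ directly. Since $T_k(\vr)$, $T_k'(\vr)\vr$ are bounded, every product with $\vu$ or $\Div\vu$ lies in $L^1$ automatically, with no constraint linking $r$ to $q$. The paper then mollifies $T_k(\vr)$ in space, uses the Friedrichs commutator lemma to obtain a pointwise equation, squares it, and applies Arzel\`a--Ascoli to prove $t\mapsto\int_\Omega [\mathcal T_k(\vr)]_\ep^2\eta$ converges in $C([0,T])$; extending by density to $\eta=1\in L^2(\Omega)$ gives continuity of $\|\mathcal T_k(\vr)(t)\|_{L^2}$. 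Combined with weak continuity this yields $\mathcal T_k(\vr)\in C([0,T];L^2)$, and a Chebyshev-type tail estimate (using only $\vr\in L^\infty(I;L^\gamma)$ and $r<\gamma$) passes from $T_k(\vr)$ to $\vr$ uniformly in $t$. The mollification step is what lets this work in the purely distributional case of Item~1, where test functions cannot reach $\partial\Omega$.
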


The same statement holds for the pure transport equation.
The theorem reads:

\begin{thm}[Pure transport equation]\label{th2} 
Let $\Omega$ and $\vu$ satisfy assumptions of Theorem \ref{th1} and let $s$ fulfill (\ref{t1.1}). 
%and moreover
%\begin{equation}\label{t1.1+}
%s\in L^{p'}(I;L^{q'}(\Omega))
%\end{equation}
Then the following statements are true:
\begin{enumerate}
\item If $s$ is a {\em renormalized distributional solution} of the pure transport equation with transporting velocity $\vc u$
(i.e. it belongs to class (\ref{trcl1}) and satisfies identities (\ref{tr1.1}), (\ref{rtr1.1})),  then $s$ and $b(s)$ with any $b$ from  (\ref{ren})
belong to class (\ref{clR})$_{\widetilde\gamma=\gamma}$
and $s$ is a {\em time integrated renormalized distributional solution} of the pure transport equation (i.e. it belongs to class (\ref{trcl1}) 
and it satisfies identities 
(\ref{tr1.3}) and (\ref{rtr1.3}) with any renormalizing function $b$ from (\ref{ren})).
\item If $s$ is a {\em renormalized weak solution } of the continuity equation with transporting velocity $\vc u$ 
(i.e. it belongs to class (\ref{trcl1}) and it satisfies identities 
(\ref{tr1.2}) and (\ref{rtr1.2})), then
$s$ and $b(s)$ with any $b$ from (\ref{ren})
belong to class (\ref{clR})$_{\widetilde\gamma=\gamma}$
and $s$
 is
{\em renormalized time integrated weak solution } (i.e. it belongs to class (\ref{trcl1}) and it satisfies identities (\ref{tr1.4}) and (\ref{rtr1.4}) with any renormalizing function
$b$ from (\ref{ren})).
%in class (\ref{t1.3+}).
\item Particularly, in both cases, $s \in C(\overline{I};L^r(\Omega))$, $1\leq r<\gamma$.
\end{enumerate}
\end{thm}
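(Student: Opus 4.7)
The plan is to follow the scheme of the proof of Theorem \ref{th1} essentially verbatim, exploiting the strong structural parallel between the renormalized continuity equation and the renormalized pure transport equation. The only genuinely new ingredient is the treatment of the extra zeroth-order source term $b(s)\Div\vu$ that appears in the renormalized formulation (\ref{rtr1.1})--(\ref{rtr1.2}) of the pure transport equation but is absent from (\ref{rco1.1})--(\ref{rco1.2}).

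First I would fix a renormalizer $b$ from class (\ref{ren}). Since $b'$ is compactly supported, $b(s)\in L^\infty(I\times\Omega)$. The distributional identity (\ref{rtr1.1}) can be rewritten as a continuity equation with source,
\begin{equation*}
\partial_t b(s)+\Div\bigl(b(s)\vu\bigr)=b(s)\Div\vu \quad\text{in }\mathcal{D}'(I\times\Omega),
\end{equation*}
with $b(s)\vu\in L^p(I;L^q(\Omega;R^d))$ and $b(s)\Div\vu\in L^p(I;L^q(\Omega))$. Exactly as in the proof of Theorem \ref{th1}, this places $\partial_t b(s)$ in $L^p(I;W^{-1,q}(\Omega))$ and, combined with the $L^\infty$ bound, yields a representative $b(s)\in C_{\rm weak}(\overline I;L^r(\Omega))$ for every $r\in[1,\infty)$. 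Testing (\ref{rtr1.1}) against $\varphi(t,x)\eta_\delta(t)$, where $\eta_\delta$ is a smooth approximation of $\mathbf{1}_{[0,\tau]}$, and letting $\delta\to 0^+$ gives (\ref{rtr1.3}) for all $\varphi\in C^\infty_c(\overline I\times\Omega)$ and all $\tau\in\overline I$. The weak-solution case proceeds identically with test functions extending up to $\partial\Omega$.

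Next I would pass from $b(s)$ back to $s$ using a sequence of renormalizers $b_n\in$(\ref{ren}) satisfying $b_n(z)=z$ for $z\in[0,n]$ and $|b_n(z)|\le|z|$. The hypothesis $s\in L^\infty(I;L^\gamma(\Omega))$ with $\gamma>1$ gives
\begin{equation*}
|b_n(s)-s|\le |s|\,\mathbf{1}_{\{|s|>n\}}\longrightarrow 0\quad\text{in }L^\infty(I;L^r(\Omega)),\ 1\le r<\gamma,
\end{equation*}
uniformly in $t$. This transfers both the time-integrated identity and the weak continuity to $s$ itself, yielding (\ref{tr1.3}) (resp.\ (\ref{tr1.4})) together with $s\in C_{\rm weak}(\overline I;L^\gamma(\Omega))$. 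To upgrade to strong continuity in $L^r(\Omega)$ for $1\le r<\gamma$, I would apply the renormalized time-integrated identity with a convex $C^1$ bounded approximation $\beta_k$ of $z\mapsto |z|^r$ and evaluate at $\varphi\equiv 1$; the resulting identity shows that $\tau\mapsto\int_\Omega \beta_k(s)(\tau)\dx$ is absolutely continuous. Passing $k\to\infty$ (justified by the uniform $L^\gamma$ bound on $s$ and by dominated convergence on the source term) gives continuity of $\tau\mapsto\|s(\tau)\|_{L^r(\Omega)}^r$, which together with weak continuity in $L^r$ produces strong continuity by the standard Radon--Riesz/convexity argument.

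The main obstacle I anticipate is precisely this last step, where the extra zeroth-order term $b(s)\Div\vu$ must be handled: it does not vanish when the renormalizer is $|z|^r$-like, as it would in the continuity equation case. What rescues the argument is that this term is absolutely integrable in time uniformly in $k$, thanks to $\Div\vu\in L^p(I;L^q(\Omega))$ and $s\in L^\infty(I;L^\gamma(\Omega))$ with the constraint $\frac{1}{q}+\frac{r-1}{\gamma}<1$ available for $r<\gamma$; hence the limit $k\to\infty$ preserves absolute continuity of the norm, and no further structural modification of the argument from Theorem \ref{th1} is required.
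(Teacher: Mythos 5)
There is a genuine gap in the strong-continuity step (Item~3), which is precisely the part where the paper departs from the elementary arguments and where your plan to argue ``verbatim'' breaks down.

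Your scheme for $s\in C(\overline I;L^r(\Omega))$ is: test the renormalized time-integrated identity with $\varphi\equiv 1$ and a renormalizer $\beta_k$ approximating $z\mapsto z^r$, conclude that $\tau\mapsto\int_\Omega\beta_k(s)(\tau)\dx$ is continuous, pass $k\to\infty$ to get continuity of $\tau\mapsto\|s(\tau)\|_{L^r}^r$, and then combine with weak continuity to upgrade to strong continuity. Two independent problems arise. First, $\varphi\equiv 1$ is admissible only in the weak (up-to-boundary) formulation. In Statement~1 the hypotheses provide only the distributional and time-integrated-distributional identities, where test functions must be compactly supported in $\Omega$; so for distributional solutions your argument never produces a statement about the full $L^r(\Omega)$-norm, and Item~3 explicitly claims strong continuity ``in both cases''. (One could try to localize and then control the boundary layer using $\sup_\tau\|s(\tau)\|_{L^\gamma(\Omega)}$, but that is an additional argument you do not make.) Second, even in the weak case, letting $k\to\infty$ inside the source term requires $s^r\,\Div\vu\in L^1(I\times\Omega)$, i.e.\ roughly $\frac{r}{\gamma}+\frac{1}{q}\le 1$, which is \emph{not} implied by the hypotheses of the theorem (the standing assumption only guarantees $s\,\Div\vu\in L^1$, i.e.\ $r=1$). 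Your stated constraint $\frac{1}{q}+\frac{r-1}{\gamma}<1$ does not resolve this, and in any case is not among the assumptions for $1\le r<\gamma$ in general. (A minor additional point: a $C^1$ function on $[0,\infty)$ with $b'$ compactly supported is eventually constant, hence cannot be both convex and a pointwise approximation of the increasing unbounded function $z^r$; the convexity requirement should simply be dropped, since uniform convexity of $L^r$, not of $b$, is what a Radon--Riesz type argument needs.)

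The paper's route (Lemma~\ref{aux4}) is built precisely so as to avoid both difficulties. It renormalizes with the \emph{bounded} truncations $T_k$, so the zeroth-order source stays in $L^p(I;L^q(\Omega))$ for each fixed $k$ with no extra integrability assumption; it mollifies in space, multiplies the mollified equation by $2[\T_k(\vr)]_\ep$, and derives equiboundedness and equicontinuity of the maps $t\mapsto\int_\Omega[\T_k(\vr)]_\ep^2\eta\dx$ for $\eta\in C^1_c(\Omega)$; Arzel\`a--Ascoli plus a diagonal and density argument (using the uniform bound $\sup_\tau\|\T_k(\vr)(\tau)\|_{L^2}$) then gives continuity of $t\mapsto\int_\Omega T_k^2(\vr)\eta\dx$ for \emph{all} $\eta\in L^2(\Omega)$, in particular $\eta=1$. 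That produces $T_k(\vr)\in C([0,T];L^2(\Omega))$ via weak plus norm convergence, entirely with interior test functions, hence equally for distributional and weak solutions. The limit $k\to\infty$ is then taken only in the tail estimate $\sup_t\|T_k(\vr)(t)-\vr(t)\|_{L^r}\to 0$, which uses nothing but $\vr\in L^\infty(I;L^\gamma(\Omega))$, never inside the source term. You need to replace your final step by this mechanism (or an equivalent localization plus tail argument) for the proof to close.
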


\begin{rmk}\label{remd1}
\begin{enumerate}
\item Concerning the continuity equation:
In practice, if $\vu \in L^{p}(0,T; W^{1,q}(\Omega;R^d)),$ condition (\ref{cocl1}) in Theorem \ref{th1} can be ensured by assumption
(\ref{t1.1}) with $\gamma$ from (\ref{gamma}).
%\begin{equation}\label{gamma} 
%1<\gamma\le \infty,\;\frac 1\gamma+\frac 1 q\le 1+\frac 1d. 
%\end{equation}
If it is so, then the class of admissible renormalizing functions in Theorem \ref{th1} can be extended from (\ref{ren})
to\footnote{Here and in the sequel the exponent $q'$ is the H\"older conjugate exponent for $q$, $q_*$ is the { Sobolev  exponent} for $q$ (and
$q_*'$ is the H\"older conjugate exponent for $q_*$).}
\begin{equation}\label{t1.3}
b\in C^1([0,\infty)),\; b(\vr)\le c(1+s^{\gamma/q_*'}),\; \vr b'(\vr)-b(\vr)\le c(1+\vr^{\gamma/q'}).
\end{equation}
This is the setting that allows to get the strongest results in applications to weak solutions to compressible fluids, see Subsection \ref{M3}.

Alternatively, condition (\ref{cocl1}) can be achieved by requiring  $\vu \in L^{p}(0,T;$ $ W^{1,q}(\Omega;R^d)),$ $\vr\in L^\alpha(0,T;L^\beta(\Omega))$,
where $p,q,\alpha,\beta$ verify (\ref{qpab}), as mentioned in Remark \ref{rmkd0}. In this case one can take the true condition (\ref{t1.1}) with 
any $\gamma>1$. Condition (\ref{qpab}) is however more restrictive than (\ref{gamma}) from the point of view of applications to compressible fluids.
This setting is merely used only at the level of approximations of underlying compressible systems during the process of construction of weak solutions. Note finally that part of the first two claims of Theorem \ref{th1} hold without the requirement that the solutions is renormalized; i.e., if $\vr$ is a distributional solution, then under the assumptions of this theorem it is a time integrated distributional solution, similarly in the case of weak solution. On the other hand, Item 3. requires that the solution is renormalized.

\item Concerning the transport equation:
In practice, if the transporting velocity $\vu \in L^{p}(0,T;$ $ W^{1,q}(\Omega;R^d)$ and $\vr\in L^\alpha(0,T; L^\beta(\Omega))$, it is  condition (\ref{qpab}) which
guarantees satisfaction 
of condition (\ref{trcl1}) in Theorem \ref{th2}. In this situation
the class of admissible renormalizing 
functions in Theorem \ref{th2} can be extended from (\ref{ren})
to
\begin{equation}\label{t1.3+}
b\in C^1([0,\infty)),\; b(s)\le c(1+ s^{\gamma/q'}).
\end{equation}
{Note further that part of the first two claims of Theorem \ref{th2} hold without the requirement that the solutions is renormalized; i.e., if $s$ is a distributional solution, then under the assumptions of this theorem it is a time integrated distributional solution, similarly in the case of weak solution. On the other hand, Item 3. requires that the solution is renormalized.} 

\item  It appears that condition (\ref{qpab}) coincides with the conditions in the assumptions in the Friedrichs commutator lemma (see Lemma \ref{l2} later) 
which is the basic tool in the passage from distributional solutions to renormalized distributional solutions. The same condition 
is needed  in the passage from distributional to weak solutions in order to allow the application of the Hardy inequality near the boundary, cf. Theorem \ref{th3} for both features. This makes of the setting (\ref{qpab}) an universal  setting convenient for general transport equations (including continuity and pure transport).
This setting in the context of general transport equations has been introduced and fully exploited 
in the seminal DiPerna--Lions' paper \cite{DL}. 
\end{enumerate} 
\end{rmk}

\subsubsection{Passage from distributional to renormalized weak solutions} \label{M2.2}

The main message of this section is the  observation that, under certain assumptions (which are, in general, slightly stronger
than assumptions in the previous section), any distributional solution (time integrated
distributional solution) of the continuity equation/pure
transport equation (introduced in Definitions \ref{dfco}--\ref{dftr}) is a renormalized weak solution. 

%Theorems \ref{th1}-\ref{th2} operate with renormalized distributional solutions.  The natural question arises under which conditions any distributional solution is %also renormalized distributional solution, or even renormalized weak  solution (up to the boundary) or even the time integrated renormalized weak solution (up to the %boundary), which are the questions risen in the items 2) and 3) precedenta Theorems \ref{th1}-\ref{th2}. The answer is formulated in the following theorem. 

\begin{thm} [Continuity equation] \label{th3} 
Let $\Omega\subset R^d$, $d\ge 2$ be a bounded domain with Lipschitz boundary. Further, let { $\vu\in L^{p}(I; W^{1,q}(\Omega;R^d))$,}
$0\le \vr\in L^\alpha(I;L^\beta(\Omega))$, where $p,q,\alpha,\beta$ satisfy condition (\ref{qpab}).

\begin{enumerate}
\item Assume that $\vr$
is a {\em distributional solution} of the continuity equation  with transporting velocity $\vc u$
(i.e. it satisfies (\ref{co1.1})). 
Then the following statements are true:
\begin{description}
\item {1.1}
 $\vr$ is a {\rm renormalized distributional solution}, i.e. it satisfies, in addition to equation (\ref{co1.1}), also equation (\ref{rco1.1}) with any renormalizing 
function $b$ in class (\ref{ren}).
\item {1.2} If moreover 
\begin{equation}\label{w0}
\vc u\in L^{p}(I; W^{1,q}_0(\Omega;R^d)),
\end{equation} 
then
$\vr$ is a {\em renormalized weak solution} of the continuity equation, i.e. $\vr$ satisfies continuity equation (\ref{co1.2}) and
its renormalized counterpart (\ref{rco1.2}) with any renormalizing function $b$ belonging to class (\ref{ren}).
\end{description}
\item
Assume  that $\vr$ belongs to class 
\begin{equation}\label{gamma++}
\vr\in C_{\rm weak}(\overline I; L^\gamma(\Omega))\;\mbox{with some $\gamma>1$}
\end{equation}
and
is a {\em time integrated  distributional solution} of the continuity equation  with transporting velocity $\vc u$
(i.e. it satisfies (\ref{cocl1}) and (\ref{co1.3})). 
Then the following statements are true:
\begin{description}
\item{2.1} Function $\vr$ belongs to (\ref{clR})$_{\widetilde\gamma=\gamma}$ and functions $b(\vr)$ with any $b\in (\ref{t1.3})$ belong to class 
(\ref{clR})$_{\widetilde\gamma=q_*'}$.
Moreover, $\vr$
is a {\it renormalized  time integrated  distributional solution} and it satisfies
 equation (\ref{rco1.3}) with any renormalizing function $b$ belonging to
(\ref{t1.3}).
\item{2.2} If moreover $\vu$ has zero traces (i.e. $\vu$ satisfies { (\ref{w0})),} then $\vr$ is a {\rm renormalized  time integrated weak solution} of the continuity equation
and it satisfies equations (\ref{co1.4}) and (\ref{rco1.4}) with any renormalizing function $b$ belonging to class (\ref{t1.3}).
\end{description}
\end{enumerate}
%

%\item If moreover
%\begin{equation}\label{tg}

%\vr\in L^\infty(I,L^\gamma(\Omega)),\;\gamma>1
%\end{equation}
%then $\vr\in C(\overline I; L^r(\Omega))$ with any $1\le r<\gamma$
%and $\vr$ is {\em renormalized time integrated weak solution} of the continuity equation (i.e. $\vr$ satisfies continuity equation (\ref{co1.4}) and
%its renormalized counterpart (\ref{rco1.4}) with any renormalizing function $b$ belonging to class (\ref{t1.3}).

\end{thm}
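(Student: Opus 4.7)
The plan is to establish Items 1.1, 1.2, 2.1, 2.2 in this order. Items 1.1 and 1.2 rest on the DiPerna--Lions spatial regularisation combined with, respectively, the Friedrichs commutator lemma (Lemma \ref{l2}) and the Hardy inequality for functions with zero traces. Items 2.1 and 2.2 are then obtained by combining the previous two with Theorem \ref{th1} (time integration of renormalized solutions) and a truncation argument that enlarges the class of admissible renormalizers from (\ref{ren}) to (\ref{t1.3}). Condition (\ref{qpab}) both guarantees $\vr\vu\in L^1(I\times\Omega;\R^d)$ and places us precisely in the functional setting required by Lemma \ref{l2}.

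For Item 1.1, I mollify $\vr$ in space by a standard kernel $\omega_\ep$ on an arbitrary $\Omega'\subset\subset\Omega$; writing $\vr_\ep=\vr*_x\omega_\ep$ and convolving (\ref{co1.1}) in $x$ yields
$$
\partial_t\vr_\ep+\Div(\vr_\ep\vu)=r_\ep,\qquad r_\ep:=\Div(\vr_\ep\vu)-(\Div(\vr\vu))*_x\omega_\ep,
$$
in the sense of distributions on $I\times\Omega'$. Lemma \ref{l2} applied with the exponents of (\ref{qpab}) gives $r_\ep\to 0$ in $L^1_{\rm loc}(I\times\Omega)$. Multiplying by $b'(\vr_\ep)$ for $b\in(\ref{ren})$ and using the chain rule in the smooth function $\vr_\ep$ produces
$$
\partial_t b(\vr_\ep)+\Div(b(\vr_\ep)\vu)+(b'(\vr_\ep)\vr_\ep-b(\vr_\ep))\Div\vu=b'(\vr_\ep)r_\ep.
$$
Since $b'$ has compact support, testing against $\vp\in\DC(I\times\Omega)$ and passing $\ep\to 0$ (Vitali for the left-hand side, $L^1_{\rm loc}$ convergence of $r_\ep$ for the right-hand side) delivers (\ref{rco1.1}).

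For Item 1.2 I choose a smooth cutoff $\psi_\ep:\Omega\to[0,1]$ vanishing in the $\ep$-neighbourhood of $\partial\Omega$, equal to $1$ outside the $2\ep$-neighbourhood, and satisfying $|\Grad\psi_\ep|\le C/d(\cdot,\partial\Omega)$. For $\vp\in\DC(I\times\overline\Omega)$, the product $\psi_\ep\vp$ is admissible in (\ref{co1.1}) and (\ref{rco1.1}); as $\ep\to 0$ only the boundary contributions
$$
\intTO{\vr\vu\cdot\Grad\psi_\ep\,\vp}\qquad\text{and}\qquad\intTO{b(\vr)\vu\cdot\Grad\psi_\ep\,\vp}
$$
need to be controlled. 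Hypothesis (\ref{w0}) permits Hardy's inequality, giving $|\vu|/d(\cdot,\partial\Omega)\in L^p(I;L^q(\Omega))$; thus $|\vu\cdot\Grad\psi_\ep|$ is bounded in $L^p(I;L^q(\Omega))$ uniformly in $\ep$ and supported in a strip of width $2\ep$. Together with $\vr\in L^\alpha(I;L^\beta(\Omega))$, $b(\vr)\in L^\infty$, and the Hölder conjugacy enforced by (\ref{qpab}), dominated convergence on the shrinking strip yields the vanishing of both integrals, proving (\ref{co1.2}) and (\ref{rco1.2}).

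For Items 2.1 and 2.2, restricting test functions in (\ref{co1.3}) to those vanishing near $t=0,T$ reduces $\vr$ to a distributional solution, to which Item 1.1 (or, under (\ref{w0}), Item 1.2) applies and produces a renormalized distributional (respectively weak) solution for every $b\in(\ref{ren})$. Since (\ref{gamma++}) entails $\vr\in L^\infty(I;L^\gamma(\Omega))$, Theorem \ref{th1} upgrades this to a renormalized time integrated distributional (respectively weak) solution, i.e.\ (\ref{rco1.3}) (respectively (\ref{rco1.4})) for every $b\in(\ref{ren})$. To enlarge the admissible class to (\ref{t1.3}) I approximate a generic $b\in(\ref{t1.3})$ by truncations $b_k\in(\ref{ren})$ with $b_k=b$ on $[0,k]$ and $b_k'\equiv 0$ on $[k+1,\infty)$, and pass $k\to\infty$: the growth bounds in (\ref{t1.3}) combined with $\vr\in L^\infty(I;L^\gamma(\Omega))$ produce $b(\vr)\in L^\infty(I;L^{q_*'}(\Omega))$ and $\vr b'(\vr)-b(\vr)\in L^\infty(I;L^{q'}(\Omega))$, which pair via Hölder with $\vu\in L^p(I;L^{q_*}(\Omega;\R^d))$ (Sobolev embedding) and $\Div\vu\in L^p(I;L^q(\Omega))$, respectively, so all terms converge. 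The resulting identity forces $b(\vr)\in C_{\rm weak}(\overline I;L^{q_*'}(\Omega))$, and strong continuity in $L^r(\Omega)$ for $1\le r<q_*'$ follows by applying Theorem \ref{th1} to a convex modulus from (\ref{t1.3}). The main obstacle I anticipate is reconciling the truncation in $k$ with the Hardy boundary step of Item 1.2 in order to carry out Item 2.2: one must verify that the uniform bounds on $b_k(\vr)\vu\cdot\Grad\psi_\ep$ depend only on the dominant growth rate in (\ref{t1.3}) and not on $k$, which is precisely what the integrability allocation described above secures.
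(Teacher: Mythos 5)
Your proof is correct and follows essentially the same route as the paper: spatial mollification plus the Friedrichs commutator lemma for Item 1.1, a cutoff vanishing near $\partial\Omega$ controlled by Hardy's inequality for Item 1.2, and then an appeal to Theorem~\ref{th1} together with a truncation of $b$ (from~(\ref{ren}) to~(\ref{t1.3})) with H\"older/Sobolev bookkeeping for Items 2.1 and 2.2. The only differences are organizational: the paper establishes Statements 1.1 and 2.1 in one pass and proves Statement 2.2 directly at the time-integrated level (with 1.2 then following by restriction), whereas you build 1.2 first and layer the time integration on top; and the ``obstacle'' you raise at the end---reconciling the truncation in $k$ with the Hardy step---does not actually arise in your setup, since you carry out the boundary approximation for each fixed $b_k\in(\ref{ren})$ before letting $k\to\infty$ in the already-integrated identity.
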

 
\begin{thm}\label{th4}
 Exactly the same statement---only with minor modifications---is va\-lid for the pure transport equation. The modifications are the following:
\begin{enumerate}
\item
In assumptions of Statement 1., equation  (\ref{co1.1}) must be replaced by (\ref{tr1.1}), and further:

In Statement 1.1, equation (\ref{rco1.1}) must be replaced by (\ref{rtr1.1}).  In Statement 1.2, equations
 (\ref{co1.3}), (\ref{rco1.3}) must be replaced (\ref{tr1.3}), (\ref{rtr1.3}) 
and condition (\ref{t1.3}) by (\ref{t1.3+}). 
\item
In assumptions of Statement 2., equations (\ref{cocl1}) and (\ref{co1.3}) must be replaced by (\ref{trcl1}) and (\ref{tr1.3})
 and condition (\ref{t1.3}) by (\ref{t1.3+}),
and further:

In Statement 2.1, equations   (\ref{rco1.3}) must be replaced by
(\ref{rtr1.3}). In Statement 2.2, equations (\ref{co1.4}), (\ref{rco1.4}) must be replaced
by (\ref{tr1.4}), (\ref{rtr1.4}) and relation (\ref{clR})$_{\widetilde\gamma=q_*'}$ must be replaced by
(\ref{clR})$_{\widetilde\gamma=q'}$.
\end{enumerate}
\end{thm}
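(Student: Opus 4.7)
The proof will be a line-by-line adaptation of the proof of Theorem \ref{th3}, with the structural changes dictated by the difference in form between the continuity and pure transport equations. Recall that the pure transport equation in weak form is $\partial_t s + \Div(s\vu) - s\,\Div\vu = 0$, and that its renormalized weak form reads $\partial_t b(s) + \Div(b(s)\vu) - b(s)\,\Div\vu = 0$; notice that the coefficient multiplying $\Div\vu$ is now $b(s)$ itself (rather than the difference $b'(\vr)\vr - b(\vr)$ appearing in the continuity case), which forces the single growth condition (\ref{t1.3+}), namely $b(s)\le c(1+s^{\gamma/q'})$, instead of the two separate bounds in (\ref{t1.3}). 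Under (\ref{qpab}) together with $s\in L^\infty(I;L^\gamma(\Omega))$, this bound is exactly what is needed to make both $b(s)\vu$ and $b(s)\,\Div\vu$ lie in $L^1(I\times\Omega)$, via Hölder in space with the Sobolev embedding for $\vu$ on the convective piece and the natural Hölder pairing against $\Div\vu\in L^p(I;L^q(\Omega))$ on the source piece.

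For Statement 1.1, I would regularize in the spatial variable by setting $s_\varepsilon = s*\omega_\varepsilon$ on any interior subdomain. Writing the distributional identity (\ref{tr1.1}) for $s$ and mollifying yields $\partial_t s_\varepsilon + \vu\cdot\Grad s_\varepsilon = r_\varepsilon$, where $r_\varepsilon = \vu\cdot\Grad s_\varepsilon - (\vu\cdot\Grad s)_\varepsilon$ is a standard Friedrichs commutator. Multiplying by $b'(s_\varepsilon)$ and rewriting as a continuity-type equation gives $\partial_t b(s_\varepsilon) + \Div(b(s_\varepsilon)\vu) - b(s_\varepsilon)\,\Div\vu = b'(s_\varepsilon)r_\varepsilon$. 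Invoking the Friedrichs commutator lemma (Lemma \ref{l2}) with exponents consistent with (\ref{qpab}), one has $r_\varepsilon\to 0$ in $L^1_{\rm loc}(I\times\Omega)$; the remaining terms pass to the limit by dominated convergence, the required uniform integrability being supplied by (\ref{t1.3+}). Testing then against $\varphi\in\DC(I\times\Omega)$ produces the renormalized distributional identity (\ref{rtr1.1}), first for bounded $b$ (from class (\ref{ren})) and then, by truncation and monotone limits, for all $b$ in (\ref{t1.3+}). For Statement 1.2, the passage up to the boundary is achieved by the same boundary-cutoff technique as in \cite{DL}: one tests with $\varphi\,\psi_\eta$, where $\psi_\eta$ is a smooth function vanishing in an $\eta$-tubular neighborhood of $\partial\Omega$, and controls the resulting boundary layer via the Hardy inequality applied to $\vu$, using hypothesis (\ref{w0}). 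The only new point compared to the continuity case is that one must also handle the extra term $b(s)\,\Div\vu\,\varphi\psi_\eta$, which is uniformly integrable by the same Hardy-type bound.

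For Statement 2 I would invoke Theorem \ref{th2}, which upgrades the renormalized distributional/weak form already obtained to its time-integrated counterpart, yielding in particular $s\in C_{\rm weak}(\overline I;L^\gamma(\Omega))$ and $b(s)\in C_{\rm weak}(\overline I;L^{q'}(\Omega))$ for $b$ in (\ref{t1.3+})---the latter exponent $q'$ being the transport analogue of $q'_*$ from Theorem \ref{th3}, reflecting precisely the fact that the $\Div\vu$-term is paired with $b(s)$ rather than with a quantity that can benefit from the Sobolev improvement of $\vu$. Testing the distributional form with $\varphi(t,x)\psi_n(t)$, where $\psi_n\in\DC(\R)$ approximates $\mathbf{1}_{[0,\tau]}$ from within $[0,T]$, and passing to the limit using the weak continuity of $s$ (resp.\ $b(s)$) at the endpoints, delivers (\ref{tr1.3}) (resp.\ (\ref{rtr1.3})). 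The main technical obstacle, as in the continuity case, is the verification that the commutator residual $b'(s_\varepsilon)r_\varepsilon$ converges to zero in $L^1$ under the extended renormalization class (\ref{t1.3+}): the growth $b(s)\le c(1+s^{\gamma/q'})$ is on the borderline of what Lemma \ref{l2} permits, and one must use a truncation $T_k(s)$, apply the commutator estimate to the truncated solution, and then pass $k\to\infty$ using the $L^\infty(I;L^\gamma)$ control on $s$; this is also where the exclusion $(q,\beta)\neq(1,\infty)$ in (\ref{qpab}) enters to make Hölder's inequality non-degenerate.
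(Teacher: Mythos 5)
Your proposal is correct and follows essentially the same route as the paper, which disposes of Theorem \ref{th4} in one sentence (``the proof follows the same lines'' as Theorem \ref{th3}): spatial mollification plus the Friedrichs commutator lemma (Lemma \ref{l2}) for passing from distributional to renormalized distributional solutions, the boundary-cutoff function together with the Hardy inequality for the passage to weak solutions, and the time-integration machinery (Lemma \ref{aux3}/Theorem \ref{th2}) for Statements 2.1--2.2. Your identification of where the growth class changes from (\ref{t1.3}) to (\ref{t1.3+}) and the exponent from $q_*'$ to $q'$---because the $\Div\vu$-coefficient in the transport case is $b(s)$ itself rather than $\vr b'(\vr)-b(\vr)$---is exactly the ``minor modification'' the paper has in mind. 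One small inaccuracy: in the boundary passage the extra term $b(s)\,\Div\vu\,\varphi\,\psi_\eta$ is handled by plain dominated convergence (since $\psi_\eta$ enters without a gradient), not by Hardy; Hardy is needed only for the term $b(s)\vu\cdot\nabla\psi_\eta\,\varphi$. This does not affect the validity of the argument.
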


\subsubsection{From pure transport equation to continuity equation} \label{M2.3}

\begin{thm}\label{th5}
Let $\Omega$ be a bounded domain with Lipschitz boundary\footnote{As a matter of fact, the assumptions is important only in case of weak solutions. The result dealing with distributional solutions holds for arbitrary domain $\Omega$.}.
Suppose that
$$
1\le q,p,\alpha_\vr,\beta_\vr,\alpha_s,\beta_s\le \infty,\; (q,\beta_\vr)\neq (1,\infty),\; (q,\beta_s)\neq (1,\infty),
$$
$$
 \frac 1{\alpha_\vr}+ \frac 1{\alpha_s} +\frac 1p\le 1,\;
\frac 1{r_\vr}+ \frac 1{r_s} +\frac 1q\le 1,
$$
where
$$
 r_\vr\left\{\begin{array}{c} 
\in [1,\infty)\,\mbox{if $q>1$ and $\beta_\vr=\infty$}\\
=\beta_\vr\,\mbox{otherwise}
\end{array}\right\}, \; 
 r_s\left\{\begin{array}{c} 
\in [1,\infty)\,\mbox{if $q>1$ and $\beta_s=\infty$}\\
=\beta_s\,\mbox{otherwise}
\end{array}\right\}.
$$
Let
$$
\vr\in L^{\alpha_\vr}(I;L^{\beta_{\vr}}(\Omega)),\; s\in L^{\alpha_s}(I;L^{\beta_{s}}(\Omega)),\;{ \vu\in L^p(I;W^{1,q}(\Omega;R^d)).}
$$
Then there holds:
\begin{enumerate}
\item
Assume additionally that 
$$
\frac 1 {t_\vr} + \frac{1}{t_s} + \frac{1}{p} \leq 1,
$$
where
$$
 t_\vr\left\{\begin{array}{c} 
\in [1,\infty)\,\mbox{if $p>1$ and $\alpha_\vr=\infty$}\\
=\alpha_\vr\,\mbox{otherwise}
\end{array}\right\},\; 
 t_s\left\{\begin{array}{c} 
\in [1,\infty)\,\mbox{if $p>1$ and $\alpha_s=\infty$}\\
=\alpha_s\,\mbox{otherwise}
\end{array}\right\}.
$$ 
If $\vr$ is a distributional (resp. weak) solution   of the continuity equation (\ref{co1})  and $s$ a distributional (resp. weak) solution of the pure transport equation (\ref{tr1}) with transporting velocity $\vu$,
then $\vr s$ is a renormalized distributional (resp. weak) solution of the continuity equation with the same transporting velocity $\vu$.
\item If $\vr\in C_{\rm weak}(\overline I;L^{\gamma_\vr}(\Omega))$ is a time integrated distributional (resp. weak) solution of the continuity equation (\ref{co1})  and $s\in
C_{\rm weak}(\overline I;L^{\gamma_s}(\Omega))$  a time integrated distributional (resp. weak) solution of the pure transport equation (\ref{tr1}) with transporting velocity $\vu$ (where $1<\gamma_\vr,\gamma_s\le \infty$, $\frac 1{\gamma_\vr}+\frac 1{\gamma_s}:=\frac 1\gamma<1$), 
then $\vr s\in C(\overline I;L^r(\Omega))$, $1\le r<\gamma$ is a renormalized distributional (resp. weak) solution of the continuity equation with the same transporting velocity $\vu$.
\end{enumerate}
\end{thm}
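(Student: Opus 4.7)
The strategy is the DiPerna--Lions regularization argument, applied simultaneously to both equations. The formal computation underlying the statement is
\[
\partial_t(\vr s)+\Div(\vr s\vu)=s\bigl[\partial_t\vr+\Div(\vr\vu)\bigr]+\vr\bigl[\partial_t s+\vu\cdot\nabla s\bigr]=0,
\]
and the task is to justify it rigorously under the stated integrability. First I would mollify $\vr$ and $s$ in the spatial variable with a standard mollifier $\omega_\ep$. Testing the distributional identities (\ref{co1.1}) and (\ref{tr1.1}) against $\omega_\ep(y-\cdot)\psi(t)$ yields, on every $\Omega'\Subset\Omega$,
\[
\partial_t\vr_\ep+\Div(\vr_\ep\vu)=r^{(1)}_\ep,\qquad \partial_t s_\ep+\vu\cdot\nabla s_\ep=r^{(2)}_\ep,
\]
where $r^{(1)}_\ep$ and $r^{(2)}_\ep$ are the classical Friedrichs commutators. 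Because $\vr_\ep$ and $s_\ep$ are smooth in $x$, the product rules $\partial_t(\vr_\ep s_\ep)=s_\ep\partial_t\vr_\ep+\vr_\ep\partial_t s_\ep$ and $\Div(\vr_\ep s_\ep\vu)=s_\ep\Div(\vr_\ep\vu)+\vr_\ep\vu\cdot\nabla s_\ep$ hold pointwise, so multiplying the first identity by $s_\ep$, the second by $\vr_\ep$ and adding gives
\[
\partial_t(\vr_\ep s_\ep)+\Div(\vr_\ep s_\ep\vu)=s_\ep r^{(1)}_\ep+\vr_\ep r^{(2)}_\ep.
\]

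I would then pass to the limit $\ep\to 0$ in the weak form of this identity tested against $\varphi\in C^\infty_{\rm c}(I\times\Omega)$ (or against $\varphi\in C^\infty_{\rm c}(I\times\overline\Omega)$ in the weak-solution case). Lemma \ref{l2} forces $r^{(1)}_\ep\to 0$ and $r^{(2)}_\ep\to 0$ in the $L^t(I;L^r(\Omega'))$ scales prescribed by $(p,q)$ and by the Lebesgue control of $\vr,s$; the assumed Hölder conditions $\frac1{\alpha_\vr}+\frac1{\alpha_s}+\frac1p\le 1$ and $\frac1{r_\vr}+\frac1{r_s}+\frac1q\le 1$, together with the extra $(t_\vr,t_s)$ condition in Statement 1 needed to accommodate the $\alpha=\infty$, $p>1$ endpoint of Friedrichs, are tailored precisely so that Hölder's inequality yields $s_\ep r^{(1)}_\ep+\vr_\ep r^{(2)}_\ep\to 0$ in $L^1_{\rm loc}(I\times\Omega)$. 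Strong $L^{\rm loc}$ convergence of the mollifications simultaneously gives $\vr_\ep s_\ep\to\vr s$ and $\vr_\ep s_\ep\vu\to\vr s\vu$ in $L^1_{\rm loc}$, so that in the limit $\vr s$ satisfies (\ref{co1.1}), i.e.\ is a distributional solution of the continuity equation with velocity $\vu$.

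To upgrade to a \emph{renormalized} solution I would invoke Theorem \ref{th3}: the product integrability $\vr s\in L^{\tilde\alpha}(I;L^{\tilde\beta}(\Omega))$ with $\frac1{\tilde\alpha}=\frac1{\alpha_\vr}+\frac1{\alpha_s}$ and $\frac1{\tilde\beta}=\frac1{r_\vr}+\frac1{r_s}$ falls inside hypothesis (\ref{qpab}), so Theorem \ref{th3}.1 promotes $\vr s$ to a renormalized distributional solution, and under (\ref{w0}) to a renormalized weak solution. In Statement 2 the $C_{\rm weak}$-in-time data on $\vr$ and $s$ give $\vr s\in L^\infty(I;L^\gamma(\Omega))$ with $\frac1\gamma=\frac1{\gamma_\vr}+\frac1{\gamma_s}>0$; Theorem \ref{th1} then places this renormalized solution in class (\ref{clR})$_{\widetilde\gamma=\gamma}$, which in particular yields the strong time continuity $\vr s\in C(\overline I;L^r(\Omega))$ for every $r<\gamma$.

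The main obstacle will be the bookkeeping around the endpoint exponents: the ``$\beta=\infty$, $q>1$'' loophole built into the definitions of $r_\vr,r_s$ (and its time analogue for $t_\vr,t_s$) forces one to replace genuine $L^\infty$ bounds by arbitrarily large but finite Lebesgue norms in the Friedrichs step, and the three Hölder inequalities---for $\vr s$ itself, for $s_\ep r^{(1)}_\ep$, and for $\vr_\ep r^{(2)}_\ep$---must be arranged so that they all close simultaneously. A second delicate point is the weak-solution variant, in which test functions do not vanish on $\partial\Omega$: since the spatial mollification is not defined in the $\ep$-tube around $\partial\Omega$, this will have to be handled via a Hardy-type estimate on the Lipschitz boundary, exactly as in the ``distributional $\Rightarrow$ weak'' passage of Theorem \ref{th3}.
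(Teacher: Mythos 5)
Your core argument for the distributional case --- mollify $\vr$ and $s$, combine the two mollified equations via the product rule, kill the commutator remainders $r^{(1)}_\ep,r^{(2)}_\ep$ by Lemma \ref{l2} together with H\"older, and then invoke Theorem \ref{th3} for the renormalization and Theorem \ref{th1} for the time continuity in Statement 2 --- is exactly the route the paper takes, and your exponent bookkeeping is consistent with it.

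The gap is in the weak-solution variant. You propose to obtain the identity up to $\partial\Omega$ by the same boundary-cutoff and Hardy-inequality device used in the proof of Theorem \ref{th3}, Statements 1.2 and 2.2. That device rests on hypothesis (\ref{w0}), i.e.\ $\vu\in L^{p}(I;W^{1,q}_{0}(\Omega;R^{d}))$: it is precisely the vanishing trace of $\vu$ that allows one to estimate $\vu/\operatorname{dist}(\cdot,\partial\Omega)$ in $L^{q}$ and thereby discard the boundary layer where $|\nabla\xi_{n}|\sim n$. Theorem \ref{th5} assumes only $\vu\in L^{p}(I;W^{1,q}(\Omega;R^{d}))$, with no zero-trace condition, so the Hardy estimate is simply not available and the cutoff terms need not vanish. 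The paper avoids the boundary altogether: since $\Omega$ is Lipschitz, $\vu$ is extended to a field in $L^{p}(I;W^{1,q}(R^{d};R^{d}))$ and $\vr,s$ are extended by zero; the weak formulations on $\overline\Omega$ then become distributional identities on all of $I\times R^{d}$, the mollification and passage to the limit are carried out on $R^{d}$ where there is no boundary layer, and restricting test functions back to $C^{\infty}_{c}(I\times\overline\Omega)$ at the very end returns the weak identity for $\vr s$. That extension-and-restriction step is what your proposal is missing, and without it the weak case does not close under the stated hypotheses.
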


\subsection{Application to compressible Navier--Stokes equations}\label{M3}

%There is a very nice application of the previous results in the case of weak solutions to the compressible Navier--Stokes equations. 
%\subsubsection{Navier--Stokes equations in barotropic regime}
For simplicity, let us first recall the  compressible Navier--Stokes equations in barotropic regime: 
 
\begin{equation} \label{2.9}
\begin{aligned}
\partial_t \vr + \Div (\vr \vu) &= 0 \\
 \partial_t (\vr \vu) + \Div (\vr\vu \otimes \vu) + \nabla p(\vr) & = {\rm div}\,\tn S(\Grad\vu)+\vr \vc{f}\\
\end{aligned}
\end{equation}
which we consider  in $(0,T)\times\Omega$, together with the initial conditions in $\Omega$
\begin{equation} \label{2.10}
\vr(0,\cdot) = \vr_0, \qquad (\vr\vu)(0,\cdot) = \vc{m}_0
\end{equation}
and so called no-slip boundary condition on $(0,T)\times \partial \Omega$
\begin{equation} \label{2.11}
\vu(t,x) =\vc{0}.
\end{equation}
The homogeneous boundary condition  (\ref{2.11}) can be replaced by Navier (slip) boundary conditions or by periodic
boundary conditions if $\Omega$ is a periodic cell. 

In the above, $\tn S$ is the viscous stress tensor, which reads
\begin{equation}\label{S}
\tn S(\Grad\vu)=\mu\Big(\Grad\vu+\Grad\vu^t-\frac 2d{\rm div}\,\vu \tn I\Big)+\lambda{\rm div}\,\vu \tn I.
\end{equation}
The viscosity coefficients are assumed to be constant: $\mu >0$ and $\lambda \ge 0$. 
Function $\vr\mapsto p(\vr)$ denotes the pressure. One supposes  that 
$$
p\in C^1([0,\infty)).
$$

The classical (or strong) solutions, in general, may not exist (we can prove their existence either if the data are smooth and  the time interval is sufficiently short or if the data are in some sense additionally sufficiently small). We therefore consider the weak solutions. They are defined as follows:

\begin{df}\label{defws}
Let $\vr_0 \in L^{\gamma}(\Omega)$, $0\le\vr_0\in L^\gamma(\Omega)$ a.e. in $\Omega$, $\gamma>1$, $r>1$, $(\vr\vu)(0,\cdot) = \vc{m}_0 \in L^1(\Omega;R^d)$ and $\vc{f} \in L^\infty((0,T)\times \Omega;R^d)$. A couple $(\vr,\vu)$ is a renormalized weak solution to the initial boundary value problem (\ref{2.9}--\ref{2.11}) iff:
\begin{enumerate}
\item The couple $(\vr,\vu)$ belongs to functional spaces
$$
0\le \vr \in C_{{\rm weak}}(\overline I;L^\gamma (\Omega)),\; \vu \in L^2(I;W^{1,2}_0(\Omega;R^d)),\;p(\vr)\in L^1(Q),
$$
$$
\vr\vu \in C_{{\rm weak}}(\overline I; L^{r}(\Omega;R^d)), \;\vr (\vu\otimes \vu), p(\vr) \in  L^1((0,T)\times \Omega; R^{d\times d}).
$$
\item $\vr$ is a time integrated renormalized weak solution to the continuity equation \eqref{2.9}$_1$ with transporting velocity $\vu$.
\item The couple $(\vr,\vu)$ verifies the momentum equation \eqref{2.9}$_2$ in the sense of distributions.
\end{enumerate}
\end{df}

If Navier or periodic conditions are considered, the functional spaces and test functions in the above definition must be accordingly modified, see
\cite{FeNo_book}, \cite{NoSt} or \cite{FNPdom}.

%Let  $\gamma >\frac d2$,   Let  $\vr \in C_{{\rm weak}}([0,T);L^\gamma (\Omega))$ and  $\vu \in L^2(0,T;W^{1,2}_0(\Omega;\R^d))$ so that $\vr\vu \in C_{{\rm %weak}}(0,T; L^{1}(\Omega;R^d))$ and $\vr (\vu\otimes \vu) \in L^1((0,T)\times \Omega; R^{d\times d})$. Let   
%the continuity equation \eqref{2.9}$_1$ hold in the weak \eqref{2.2}  and in the renormalized up to the boundary sense \eqref{2.5} and the momentum equation
%\begin{equation} \label{2.12}
%\begin{aligned}
%&\int_0^T \int_\Omega \Big(-\vr \vu \cdot \partial_t \vcg{\varphi} - \vr (\vu \otimes \vu):\nabla \vcg{\varphi} + \mu \nabla \vu:\nabla \vcg{\varphi} + \Div \vu \Div %\vcg{\varphi}\\
%& -\vr^\gamma \Div \vcg{\varphi}\Big) \dxdt 
%- \int_\Omega \vc{m}_0 \cdot \vcg{\varphi}(0,\cdot) \dx = \int_0^T \int_\Omega \vr\vc{f}\cdot \vcg{\varphi} \dxdt
%\end{aligned}
%\end{equation} 
%hold for any $\vcg{\varphi} \in C^\infty_{\rm c}([0,T)\times \Omega; R^d)$. 

\begin{cor} \label{c1}
Let $\gamma$ verify condition (\ref{gamma}) with $q=2$ (in particular $\gamma\ge 6/5$ if $d=3$). Then the claims of  
Theorems \ref{t2} and \ref{t3} (and Corollaries \ref{cor1}, \ref{cor2}) hold for any renormalized weak solution to the compressible 
Navier--Stokes equations specified in Definition  \ref{defws}.   
%\begin{equation} \label{2.8a}
%|\{x\in \Omega: \vr(t,x) = 0\}|_d \in C([0,T]).
%\end{equation}
\end{cor}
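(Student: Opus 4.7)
The plan is to treat the corollary as essentially a matching of functional spaces: I would verify that every renormalized weak solution $(\vr,\vu)$ in the sense of Definition \ref{defws} satisfies all the hypotheses of Theorems \ref{t2} and \ref{t3} under the standing assumption $\gamma \ge 2d/(d+2)$, i.e.\ (\ref{gamma}) with $q=2$. Once that is done, the conclusions of these theorems — and therefore of Corollaries \ref{cor1}, \ref{cor2} — transfer verbatim.

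First I would fix $p=q=2$. By Definition \ref{defws} one has $\vu\in L^{2}(I;W^{1,2}_0(\Omega;R^d))$, $0\le\vr\in C_{\rm weak}(\overline I;L^\gamma(\Omega))$ with $\gamma>1$, and $\vr$ is a renormalized time integrated weak solution of the continuity equation with transporting velocity $\vu$. This already provides the full structural input of Theorems \ref{t2} and \ref{t3}; the no-slip condition (\ref{2.11}) supplies the additional requirement (\ref{w0}) needed for Theorem \ref{t3}. The only remaining point is the integrability (\ref{cocl1}) of $\vr\vu$. For $d\ge 3$, Sobolev embedding gives $\vu\in L^2(I;L^{2d/(d-2)}(\Omega;R^d))$, and by H\"older $\vr\vu\in L^1(I\times\Omega;R^d)$ as soon as
\[
\frac{1}{\gamma}+\frac{d-2}{2d}\le 1, \qquad \mbox{equivalently}\quad \frac{1}{\gamma}+\frac{1}{2}\le 1+\frac{1}{d},
\]
which is precisely (\ref{gamma}) with $q=2$. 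For $d=2$ the embedding $W^{1,2}(\Omega)\hookrightarrow L^r(\Omega)$ for every $r<\infty$ makes the same check trivial.

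With the hypotheses of Theorem \ref{t2} in place, the characteristic function $s_\vr$ of the vacuum set belongs to $C([0,T];L^r(\Omega))$ and solves the time integrated renormalized pure transport equation; in particular (\ref{2.8}) holds. With (\ref{w0}) in addition, Theorem \ref{t3} applies to any distributional solution $R$ in the class (\ref{clR-}) with exponents $(q,\alpha,\beta)$ compatible with (\ref{qpab}), yielding the continuity statement (\ref{clR}), the conservation identity (\ref{clRR}), and the vacuum-inclusion when $\vr(0,\cdot)>0$; Corollaries \ref{cor1} and \ref{cor2} then follow immediately. There is no genuine obstacle: the only substantive computation is the H\"older--Sobolev inequality verifying (\ref{cocl1}), and the remainder of the proof is a direct dictionary between the functional setting of Definition \ref{defws} and the hypotheses of Theorems \ref{t2}--\ref{t3}.
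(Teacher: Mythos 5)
Your proposal is correct and takes essentially the same approach as the paper: the paper's own proof is the one-line observation that "all assumptions of Theorems \ref{t2} and \ref{t3}, as well as of Corollaries \ref{cor1} and \ref{cor2}, are fulfilled" under Definition \ref{defws}, and you simply spell out that dictionary explicitly (no-slip gives (\ref{w0}), the H\"older--Sobolev check that (\ref{gamma}) with $q=2$ is exactly what makes $\vr\vu\in L^1$, etc., which the paper itself records in Remark \ref{rmkd0}). No gap; the extra detail is harmless and matches the intended argument.
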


\begin{rmk}\label{r1} 
\begin{enumerate}
\item
Note that renormalized weak solutions to the Navier--Sto\-kes equations with the regularity properties stated above (and, additionally, fulfilling the energy inequality)  can be constructed  with any of no-slip, Navier (slip) or periodic boundary conditions provided $\gamma>d/2$ and
$$
p(0)=0,\;p'(\vr)\ge a_1\vr^\gamma-b,\;p(\vr)\le a_2\vr^\gamma+b,\;\mbox{with some}\; a_1,a_2,b>0,
$$ 
\cite{FeNoPe} (for monotone pressure), \cite{Fe2002} (for non monotone pressure) and sufficiently regular { domains,} and \cite{NoNo}, \cite{Kuku} or \cite{Poul}
for a generalization to Lipschitz domains.
\item 
The above condition for pressure allows pressure functions which are non monotone on a compact portion of $[0,\infty)$.
In the case of periodic boundary conditions and provided $\gamma\ge 9/5$,  { this condition can be generalized allowing pressure 
functions non monotone  up to infinity and, also, another generalization allows small anisotropic perturbations of the isotropic 
stress tensor (\ref{S}), see Bresch, Jabin \cite[Theorems 3.1 and 3.2]{BrJa}.}
{
\item {Theorems \ref{t2}, \ref{t3} and Corollaries \ref{cor1}, \ref{cor2} also apply  to a couple $(\vr,\vu)$, where $(\vr,\vu,\vt)$---(density, velocity, temperature)---is a weak solution of the full Navier--Stokes--Fourier system,} constructed (according to different definitions of weak solutions
under different physical assumptions on constitutive laws and transport coefficients)  either in Feireisl \cite[Definition 7.1 and Theorem 7.1]{FeBook} or in \cite[Theorem 3.1]{FeNo_book} or in \cite{FeNoEd}, \cite{FNS}.
\item Theorems \ref{t2}, \ref{t3} and Corollaries \ref{cor1}, \ref{cor2} do not, in general, directly apply to a couple $(\vr,\vu)$ of weak solutions of Navier-Stokes equations
with degenerate density dependent viscosities unless it cannot be guaranteed that $\vu$ belongs to a Sobolev space of type $L^p(I;W^{1,q}(\Omega;R^d))$. In  fact, in this situation, typically, $\nabla\vu$ belongs to a Lebesgue space weighted by a positive power of $\vr$ (cf. { Bresch}, Desjardins \cite{BrDes}, Mellet, Vasseur \cite{MeVa},
Vasseur, Yu \cite{VY},
Li, Xin \cite{LiXin} for non exhausting relevant references).
}
\end{enumerate}
\end{rmk}

%\subsubsection{Other cases}

\section{Basic preliminaries}\label{S4}

Let us mention some standard preliminary tools. We shall use several times the theorem on Lebesgue points in the following form. 
\begin{lem}\label{Lbg}
Let  $f\in L^1(0,T;L^\gamma(\Omega))$, $1\le\gamma<\infty$. Then there exists $N\subset (0,T)$ of zero Lebesgue measure such that
for all $\tau\in (0,T)\setminus N$,
{ $$
\begin{aligned}
\lim_{h\to 0+} \frac 1h\int_{\tau-h}^\tau \|f(t,\cdot)-f(\tau,\cdot)\|_{L^\gamma(\Omega)}{\rm d}t&\to 0, \\ 
\lim_{h\to 0+} \frac 1h\int_\tau^{\tau+h} \|f(t,\cdot)-f(\tau,\cdot)\|_{L^\gamma(\Omega)}{\rm d}t&\to 0.
\end{aligned}
$$
Moreover, if $f\in C_{\rm weak}([0,T];L^\gamma(\Omega))$, then for any $\eta \in L^{\gamma'}(\Omega)$
$$
 \forall \tau\in [0,T),\;\intO{f(\tau,\cdot)\eta}= \lim_{h\to 0+}\frac 1h\int_{\tau}^{\tau+h}\Big(\intO{f(t,\cdot)\eta}\Big){\rm d}t
$$
and
$$
\sup_{\tau\in [0,T]}\|f(\tau,\cdot)\|_{L^\gamma(\Omega)}\le \|f\|_{L^\infty(0,T;L^{\gamma'}(\Omega))}.
$$}
\end{lem}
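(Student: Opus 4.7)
The plan is to address the three assertions of the lemma in succession, each by invoking a standard tool from vector-valued integration.

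For the first claim (existence of one-sided Lebesgue points in $L^\gamma(\Omega)$), I would apply the Bochner-space Lebesgue differentiation theorem. Since $1\le \gamma<\infty$ the target space $L^\gamma(\Omega)$ is separable, so any $f\in L^1(0,T;L^\gamma(\Omega))$ is strongly measurable and Bochner integrable. The classical argument approximates $f$ in the $L^1(0,T;L^\gamma(\Omega))$ norm by a sequence of simple (step) functions $f_k$, for each of which every point outside a finite set is trivially a two-sided Lebesgue point. The remainder $f-f_k$ is handled through the one-dimensional Hardy--Littlewood maximal inequality applied to the scalar function $t\mapsto \|f(t)-f_k(t)\|_{L^\gamma(\Omega)}$, combined with a Borel--Cantelli argument. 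This produces a single null set $N\subset (0,T)$ outside of which both the left- and right-sided averages of $\|f(t,\cdot)-f(\tau,\cdot)\|_{L^\gamma(\Omega)}$ tend to zero as $h\to 0+$, which is exactly the assertion.

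The second claim follows from purely scalar considerations. Under the weak-continuity hypothesis, for any fixed $\eta\in L^{\gamma'}(\Omega)$ the real-valued function $t\mapsto \intO{f(t,\cdot)\eta}$ is continuous on $[0,T]$ by the very definition of $C_{\rm weak}([0,T];L^\gamma(\Omega))$. The fundamental property of one-sided averages of continuous scalar functions then gives
\[
\lim_{h\to 0+}\frac 1h\int_\tau^{\tau+h}\Big(\intO{f(t,\cdot)\eta}\Big)\dt = \intO{f(\tau,\cdot)\eta}
\]
for every $\tau\in [0,T)$, with the analogous left-sided identity valid at $\tau = T$.

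For the pointwise norm bound, fix $\tau\in [0,T]$ and $\eta\in L^{\gamma'}(\Omega)$ with $\|\eta\|_{L^{\gamma'}(\Omega)}\le 1$. Using the averaging identity just established (or its left-sided analogue at $\tau = T$) together with the essential bound on the integrand, I would estimate
\[
\Big|\intO{f(\tau,\cdot)\eta}\Big| = \lim_{h\to 0+}\Big|\frac 1h\int_\tau^{\tau+h}\intO{f(t,\cdot)\eta}\dt\Big|\le \|f\|_{L^\infty(0,T;L^\gamma(\Omega))}.
\]
Taking the supremum over $\eta$ in the closed unit ball of $L^{\gamma'}(\Omega)$ and invoking the identification of the $L^\gamma(\Omega)$ norm by duality (valid because $\gamma<\infty$) then yields the stated inequality. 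The only genuine technical input is the Bochner-space Lebesgue differentiation theorem for separable targets; the remaining two assertions reduce to the one-dimensional fundamental theorem of calculus for continuous functions and to the duality $(L^{\gamma'})^*\supseteq L^\gamma$, so I expect no serious obstacle.
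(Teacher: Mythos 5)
Your proposal is correct, and since the paper states Lemma~\ref{Lbg} without proof (it is offered as a ``standard preliminary tool''), there is no authorial argument to compare against; but the route you take is the expected one. The first claim is exactly the vector-valued Lebesgue differentiation theorem for Bochner-integrable functions with values in a separable Banach space (here $L^\gamma(\Omega)$ with $\gamma<\infty$), and your sketch --- approximation in $L^1(0,T;L^\gamma(\Omega))$ by step functions, the scalar Hardy--Littlewood maximal inequality applied to $t\mapsto\|f(t)-f_k(t)\|_{L^\gamma(\Omega)}$, and a Borel--Cantelli passage --- is the canonical proof. The second claim reduces, as you say, to the elementary fact that one-sided averages of a continuous scalar function converge to the function's value, applied to $t\mapsto\intO{f(t,\cdot)\eta}$, which is continuous on $[0,T]$ by the definition of $C_{\rm weak}([0,T];L^\gamma(\Omega))$ given in the paper. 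The third claim then follows by combining this averaging identity with H\"older's inequality inside the average and the duality formula $\|g\|_{L^\gamma(\Omega)}=\sup_{\|\eta\|_{L^{\gamma'}(\Omega)}\le 1}\big|\intO{g\eta}\big|$, valid for $1\le\gamma<\infty$; the one-sided caveat at $\tau=T$ is correctly handled by using the left-sided average there. One remark worth making explicit: the right-hand side in the paper's third displayed inequality reads $\|f\|_{L^\infty(0,T;L^{\gamma'}(\Omega))}$, which is a misprint for $\|f\|_{L^\infty(0,T;L^{\gamma}(\Omega))}$ --- the only $L^\infty$-in-time norm that is even finite under the hypothesis $f\in C_{\rm weak}([0,T];L^\gamma(\Omega))\subset L^\infty(0,T;L^\gamma(\Omega))$; your estimate produces the corrected exponent, which is what is actually used later in the paper.
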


 We shall also  frequently use mollifiers. For the sake of completeness, we recall the basic facts. 
We denote by $j$ a function on $R^{d}$, $d \geq 1$, satisfying the following  requirements:
    $
    j \in C^{\infty}_c(R^{d}),\; {\rm supp}(j)=B(0,1),
    $
    $
    j(x)=j(-x),\; j \geq 0\;\mbox{on  $R^{d}$},\; \int_{R^d} j(x) {\rm d}x=1.
    $
    Next, for $\epsilon>0$, we denote by $j_\epsilon$ the  function
    $j_{\epsilon}(x):=\frac{1}{\epsilon^{d}}j(\frac{x}{\epsilon})$.
    For a given function $f\in L_{\rm loc}^1(R^{d})$, we finally define mollified $f$ as follows:
$
[f]_{\epsilon}:=f*j_{\epsilon} (x)=\int_{R^d}j_{\epsilon}(x-y) f(y){\rm d}y
$.
		
Let us recall the classical properties of these approximations.
    \begin{lem}\label{prop1}
		\begin{enumerate}
		\item
    If $1 \leq p < \infty$, then for any $f \in L^{p}(R^{d})$
    $$
		 [f]_\ep \in C^{\infty}(R^{d}) \cap {L^{p}}(R^{d}),\;
    \|{[f]_{\epsilon}}\|_{L^{p}(R^d)} \leq \|{f}\|_{L^{p}(R^d)}
		$$
		and
		$$
    f_\ep \rightarrow f\;\mbox{ in $L^p(R^d)$.}
    $$
		\item
    If $p=\infty$, then
		$$
    [f]_\ep \in C^{\infty}(R^{d}) \cap {L^{\infty}}(R^{d}),\;
    \|{[f]_{\epsilon}}\|_{L^{\infty}(R^d)} \leq \|{f}\|_{L^{\infty}(R^d)}.
		$$
		Moreover, if $f$ is uniformly continuous on { $R^d$,} then
		$$
		[f]_\ep\to f\;\mbox{in $C_b(R^d)$}.
		$$
		\item Let $1\le p\le\infty$. For all $f\in L^p(R^d)$, $g\in L^{p'}(R^d)$,
		$$
		\int_{R^d}[f]_\ep g \dx=\int_{R^d} f [g]_\ep \dx.
		$$
		\end{enumerate}
    \end{lem}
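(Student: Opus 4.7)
\medskip

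\noindent\textbf{Proof plan for Lemma \ref{prop1}.} All three items are classical and essentially follow from Minkowski's inequality, Fubini's theorem, and the density of $C_c(R^d)$ in $L^p(R^d)$ for $1\le p<\infty$. The strategy is to first verify the pointwise/norm bounds, then handle smoothness by differentiation under the integral, and finally obtain convergence by a density argument.

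For Item 1, I would first observe that $[f]_\epsilon(x)=\int_{R^d} j_\epsilon(x-y)f(y)\,{\rm d}y$ is smooth in $x$ because $j_\epsilon\in C_c^\infty(R^d)$ allows differentiation of any order under the integral sign (dominated convergence with the dominant $|\partial^\alpha j_\epsilon|\cdot\mathbf{1}_{B(x,\epsilon)}|f|$, which is integrable by H\"older since $f\in L^p$ and $j_\epsilon$ is bounded and compactly supported). The bound $\|[f]_\epsilon\|_{L^p}\le\|f\|_{L^p}$ follows from Minkowski's integral inequality applied to the convolution, using $\int j_\epsilon =1$ and $j_\epsilon\ge 0$; equivalently one can invoke Young's convolution inequality with $\|j_\epsilon\|_{L^1}=1$. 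For the convergence $[f]_\epsilon\to f$ in $L^p$, the standard three-step density argument works: first check it for $\varphi\in C_c(R^d)$, where uniform continuity of $\varphi$ together with $\int j_\epsilon=1$ and ${\rm supp}(j_\epsilon)\subset B(0,\epsilon)$ gives $\|[\varphi]_\epsilon-\varphi\|_{L^\infty}\to 0$ with supports contained in a fixed compact set, hence $L^p$ convergence; then for general $f\in L^p$, approximate $f$ in $L^p$ by $\varphi\in C_c(R^d)$ and use the uniform bound $\|[\cdot]_\epsilon\|_{L^p\to L^p}\le 1$ to conclude by a triangle inequality.

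For Item 2, the smoothness and the $L^\infty$ bound follow in exactly the same way, now using $|[f]_\epsilon(x)|\le\|f\|_{L^\infty}\int j_\epsilon=\|f\|_{L^\infty}$. If $f$ is uniformly continuous on $R^d$, then given $\eta>0$ one chooses $\epsilon_0$ so that $|x-y|<\epsilon_0$ implies $|f(x)-f(y)|<\eta$; for $\epsilon<\epsilon_0$ the identity $[f]_\epsilon(x)-f(x)=\int j_\epsilon(x-y)(f(y)-f(x))\,{\rm d}y$ gives $\|[f]_\epsilon-f\|_{C_b(R^d)}\le\eta$, which is the claimed uniform convergence.

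For Item 3, the symmetry identity is just an application of Fubini's theorem combined with the evenness $j_\epsilon(z)=j_\epsilon(-z)$: writing out
\[
\int_{R^d}[f]_\epsilon(x)\,g(x)\,{\rm d}x=\int_{R^d}\!\!\int_{R^d} j_\epsilon(x-y)f(y)g(x)\,{\rm d}y\,{\rm d}x,
\]
the hypothesis $f\in L^p$, $g\in L^{p'}$ together with $j_\epsilon\in L^1$ ensures absolute integrability (by Young and H\"older) so Fubini applies; the change of variable $z=x-y$ in the inner integral together with $j_\epsilon(z)=j_\epsilon(-z)$ converts the expression into $\int f(y)[g]_\epsilon(y)\,{\rm d}y$. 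No step here is genuinely hard; the only mild subtlety is justifying the integrability needed to invoke Fubini in the case $p=\infty$ or $p=1$, which is handled by noting that $j_\epsilon$ is compactly supported and bounded so one of the two functions is effectively being convolved against a test function.
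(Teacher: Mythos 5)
The paper records Lemma~\ref{prop1} as a collection of classical facts about mollifiers (``Let us recall the classical properties of these approximations'') and offers no proof, so there is nothing in the text to compare against line by line. Your proposal supplies exactly the standard textbook argument --- smoothness by differentiating under the integral sign, the $L^p$ bound by Young/Minkowski with $\|j_\ep\|_{L^1}=1$, $L^p$ convergence by density of $C_c(R^d)$ plus the uniform operator bound, the $C_b$ convergence from uniform continuity, and the symmetry identity in Item~3 from Fubini together with $j_\ep(z)=j_\ep(-z)$ --- and it is correct. The only cosmetic imprecision is in the differentiation-under-the-integral step: the dominant should be taken uniform over a neighbourhood of the base point $x_0$, e.g.\ $\|\partial^\alpha j_\ep\|_{L^\infty}\,\mathbf{1}_{B(x_0,2\ep)}(y)\,|f(y)|$ (integrable by H\"older since the indicator lies in $L^{p'}$), rather than the $x$-dependent set $B(x,\ep)$ you wrote; this is a detail and does not affect the argument.
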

		
	The next lemma is the well-known Friedrichs lemma on commutators. It deals with the regularization of the
		quantity $\vu\cdot\Grad f$ 		defined in the sense of distributions as
		$$
		\vu\cdot\Grad f:={\rm div}(f\vu)-f{\rm div}\vu.
		$$
The lemma reads.
\begin{lem}[Friedrichs commutator lemma] \label{l2}
Let $I\subset R$ be an open bounded interval and $f \in L^\alpha(I;L^\beta_{{\rm loc}}(R^d))$, $\vu \in L^p(I;W^{1,q}_{{\rm loc}}(R^d;R^d))$. Let $1\leq q,\beta\leq \infty$, $(q,\beta)  \neq (1,\infty)$, $\frac 1q + \frac 1{\beta} \leq 1$, $1\leq \alpha \leq \infty$ and $\frac 1\alpha + \frac 1p \leq 1$.  Then
$$
[\vu \cdot \nabla f]_\ep - \vu \cdot \nabla [f]_\ep \to 0
$$
strongly in $L^t(I;L^r_{{\rm loc}} (R^d))$, where 
$$
\frac 1t \geq \frac 1\alpha + \frac 1p,\;  t\in [1,\infty)
$$
and 
$$
r\in [1,q) \text{ for } \beta = \infty, \ q \in (1,\infty],
$$ 
while $\frac 1\beta + \frac 1q \leq \frac 1r \leq 1$ otherwise. 
\end{lem}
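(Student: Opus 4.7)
The plan is to follow the classical DiPerna--Lions approach: represent the commutator explicitly and pass to the limit via a density argument after a uniform estimate. Using the distributional identity $\vu\cdot\Grad f=\Div(f\vu)-f\,\Div\vu$ and the fact that mollification commutes with differentiation, I would write
\[
r_\ep := [\vu\cdot\Grad f]_\ep - \vu\cdot\Grad[f]_\ep = \bigl\{\Div[f\vu]_\ep - \vu\cdot\Grad[f]_\ep\bigr\} - [f\,\Div\vu]_\ep.
\]
A direct computation from the definition of convolution recasts the bracketed expression as
\[
\int_{R^d} f(y)\bigl(\vu(y) - \vu(x)\bigr)\cdot\nabla_x j_\ep(x-y)\,{\rm d} y = -\int_{R^d} f(x+\ep z)\,\frac{\vu(x+\ep z) - \vu(x)}{\ep}\cdot\Grad j(z)\,{\rm d} z,
\]
the latter form being obtained by the substitution $y=x+\ep z$ and the oddness of $\Grad j$.

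The technical heart of the proof is a uniform bound of the form
\[
\|r_\ep\|_{L^t(I;L^r(B))} \le C\,\|f\|_{L^\alpha(I;L^\beta(B'))}\,\|\Grad\vu\|_{L^p(I;L^q(B'))}
\]
for bounded $B\subset B'\subset R^d$ with $B'\supset B+\overline{B(0,1)}$. For the first piece I would apply Minkowski's integral inequality in the $z$-variable, H\"older spatially with $\tfrac{1}{r}\ge\tfrac{1}{\beta}+\tfrac{1}{q}$, and the Sobolev-type difference quotient estimate $\|\vu(\cdot+\ep z)-\vu(\cdot)\|_{L^q(B)}\le \ep|z|\,\|\Grad\vu\|_{L^q(B')}$, valid by density when $q<\infty$. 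The endpoint exclusion $(q,\beta)\neq(1,\infty)$ and the strict constraint $r<q$ when $\beta=\infty$ both reflect exactly where one of the translation operators must act continuously on the strong Lebesgue topology. The second piece $[f\,\Div\vu]_\ep$ is controlled directly by the contraction property of mollifiers. Time integration via H\"older then produces the constraint $\tfrac{1}{t}\ge\tfrac{1}{\alpha}+\tfrac{1}{p}$.

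With the uniform estimate in hand, I conclude by density. For smooth $f$ and $\vu$ a Taylor expansion yields $\tfrac{\vu(x+\ep z)-\vu(x)}{\ep}\to \Grad\vu(x)\,z$ pointwise, and the integration-by-parts identity $\int z_j\,\partial_i j(z)\,{\rm d}z=-\delta_{ij}$ (from $\int j=1$) gives the pointwise limit $f(x)\,\Div\vu(x)$ for the first piece. Since $[f\,\Div\vu]_\ep\to f\,\Div\vu$ strongly, these limits cancel and $r_\ep\to 0$ on smooth data. A diagonal approximation of $f$ and $\vu$ by smooth representatives (mollified in both space and time), together with the uniform bound to control the approximation errors, then extends the conclusion to the full class. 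The hard part, I expect, is the careful bookkeeping at the exponent endpoints---distinguishing $\beta=\infty$ from $\beta<\infty$ and absorbing the loss $r<q$ forced by the failure of strong continuity of translations in $L^\infty$---so that both the uniform estimate and the density approximation are simultaneously valid in the same $L^t(I;L^r_{\rm loc})$ topology.
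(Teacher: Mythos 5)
The paper states this lemma without proof in Section~4 as a classical preliminary (the Friedrichs commutator lemma going back to DiPerna--Lions; detailed proofs appear in the cited monographs, e.g.\ Novotn\'y--Stra\v{s}kraba or Feireisl--Novotn\'y). Your reconstruction is the standard argument: exact kernel representation via the substitution $y=x+\ep z$, uniform $L^t(I;L^r_{\rm loc})$ bound by Minkowski, H\"older and the difference-quotient estimate $\|\vu(\cdot+\ep z)-\vu(\cdot)\|_{L^q}\le \ep|z|\,\|\Grad\vu\|_{L^q}$, pointwise identification of the limit $f\,\Div\vu$ for smooth data using $\int z_j\,\partial_i j\,{\rm d}z=-\delta_{ij}$, and conclusion by density together with the uniform bound. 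You also correctly locate the source of the strict inequality $r<q$ when $\beta=\infty$ and the exclusion $(q,\beta)\neq(1,\infty)$: the density step forces approximation of $f$ in some $L^\sigma_{\rm loc}$ with $\sigma<\infty$, which is compatible with the uniform bound only when one gives up $\frac1r\ge\frac1\sigma+\frac1q>\frac1q$.
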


\section{Proof of Theorems \ref{th1}--\ref{th2}}\label{S5}

The proof of Theorems { \ref{th1}--\ref{th2} is based} on the following two lemmas. The first lemma deals with distributional (or weak) solutions to conservation laws 
(\ref{dd}) and claims that their solutions admit, under certain conditions, $C_{\rm weak}([0,T],L^1(\Omega))$-representatives, and can be therefore
integrated up to the endpoints of any time interval $[0,\tau]\subset [0,T]$. 

\begin{lem}\label{aux3} 
Let $d \in L^{\infty}(I,L^{\gamma}(\Omega))$, $\gamma >1$ and $\vc{F} \in L^{1}(Q;R^{d})$, $G \in L^{1}(Q) $. 
\begin{enumerate}
\item Suppose that
\begin{equation}\label{dd}
\partial_{t}d+\Div \vc{F}+G=0 \quad \mbox{ in} \quad D'(Q).
\end{equation}
Then there exists a representative of $d$ such that it belongs to the space $C_{{\rm weak}}([0,T],L^{\gamma}(\Omega))$ and equation (\ref{dd})  can be integrated up to any time $\tau\in (0,T]$, i.e.
 $\forall \xi \in C^{1}([0,T])$, $\forall \tau \in (0,T]$ and $\forall \eta \in C_{c}^{1}(\Omega)$, there holds
\begin{equation}\label{ddi}
\int_{\Omega}d(\tau, x)\xi(\tau)\eta (x)\dx-\int_{\Omega}d(0,x)\xi(0)\eta(x)\dx=
\end{equation}
\[\int_{0}^{\tau}\int_{\Omega}\Big(d(t,x)\partial_{t}\xi(t)+\vc{F}(t,x)\cdot\nabla \eta(x)\xi(t) -G(t,x)\xi(t)\eta(x)\Big)\dxdt.\]
\item Suppose that (\ref{dd}) holds up to the boundary, i.e.
$$
\int_Q\Big(d\partial_t\varphi +\vc{F}\cdot\Grad\varphi-G\varphi\Big) \dxdt=0\;\mbox{for all $\varphi\in C^1_c((0,T)\times\overline\Omega)$.}
$$
Then there exists a representative of $d$ such that it belongs to the space $C_{{\rm weak}}([0,T],L^{\gamma}(\Omega))$ and equation (\ref{ddi}) holds $\forall \xi \in C^{1}([0,T])$, $\forall \tau \in (0,T]$ 
and $\forall \eta \in C_{c}^{1}(\overline\Omega)$.
\end{enumerate}
\end{lem}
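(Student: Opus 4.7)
The plan is to test the distributional equation (\ref{dd}) against tensorized functions $\varphi(t,x) = \psi(t)\eta(x)$ with $\psi \in C_c^\infty(0,T)$ and $\eta \in C_c^1(\Omega)$. For each such $\eta$, setting $h_\eta(t) := \int_\Omega d(t,x)\eta(x)\dx$ produces an $L^\infty(0,T)$ function whose distributional derivative on $(0,T)$ is
\[
h_\eta'(t) = \int_\Omega \vc{F}(t,\cdot)\cdot\nabla\eta\,\dx - \int_\Omega G(t,\cdot)\eta\,\dx.
\]
The right-hand side lies in $L^1(0,T)$, so $h_\eta \in W^{1,1}(0,T) \hookrightarrow C([0,T])$ and admits a canonical continuous representative $\tilde h_\eta$.

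Next I would upgrade this pointwise-in-$\eta$ continuity into weak continuity of $d$ with values in $L^\gamma(\Omega)$. Choose a countable family $\{\eta_n\} \subset C_c^1(\Omega)$ dense in $L^{\gamma'}(\Omega)$ (possible because $1 < \gamma' < \infty$). Then there is a single null set $N \subset [0,T]$ such that for every $t \notin N$ and every $n$ one has $\|d(t,\cdot)\|_{L^\gamma(\Omega)} \leq M := \|d\|_{L^\infty(I;L^\gamma(\Omega))}$ and $\tilde h_{\eta_n}(t) = \int_\Omega d(t,\cdot)\eta_n \, \dx$. By H\"older's inequality and continuity of $\tilde h_{\eta_n}$, the bound $|\tilde h_{\eta_n}(\tau)| \leq M\|\eta_n\|_{L^{\gamma'}(\Omega)}$ then holds for \emph{all} $\tau \in [0,T]$. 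For each fixed $\tau$, the Riesz representation theorem yields $\tilde d(\tau,\cdot) \in L^\gamma(\Omega)$ with $\|\tilde d(\tau,\cdot)\|_{L^\gamma(\Omega)}\leq M$ and $\int_\Omega \tilde d(\tau,\cdot)\eta\,\dx = \lim_n \tilde h_{\eta_n}(\tau)$ for any sequence $\eta_n \to \eta$ in $L^{\gamma'}(\Omega)$. The map $\tau \mapsto \int_\Omega \tilde d(\tau,\cdot)\eta\,\dx$ is then a uniform limit of continuous functions on $[0,T]$, hence continuous, which gives $\tilde d \in C_{\rm weak}([0,T];L^\gamma(\Omega))$; and $\tilde d(t) = d(t)$ in $L^\gamma(\Omega)$ for every $t \notin N$ shows that $\tilde d$ is indeed a representative.

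With this representative in hand, for any $\xi \in C^1([0,T])$, $\tau \in (0,T]$, and $\eta \in C_c^1(\Omega)$, the Newton--Leibniz formula applied to $\tilde h_\eta \cdot \xi \in W^{1,1}(0,\tau)$ yields
\[
\tilde h_\eta(\tau)\xi(\tau) - \tilde h_\eta(0)\xi(0) = \int_0^\tau \tilde h_\eta(t)\xi'(t)\dt + \int_0^\tau \tilde h_\eta'(t)\xi(t)\dt,
\]
which, after substituting the expression above for $\tilde h_\eta'$, is exactly (\ref{ddi}). The second item proceeds identically, except that I would use tensorized test functions with $\eta \in C_c^1(\overline\Omega)$, admissible because (\ref{dd}) now holds against all $\varphi \in C^1_c((0,T)\times\overline\Omega)$; the density of $C^1(\overline\Omega)$ in $L^{\gamma'}(\Omega)$ again delivers the weakly continuous representative on $[0,T]$. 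I expect the main technical obstacle to be the construction of the $C_{\rm weak}$ representative: one must select the null set $N$ independently of $\eta$, which forces the countable dense family $\{\eta_n\}$, and then the passage from this countable family to arbitrary $\eta \in L^{\gamma'}(\Omega)$ relies crucially on the uniform bound $M\|\eta\|_{L^{\gamma'}(\Omega)}$. Once this is in place, the integration up to endpoints is a routine consequence of the fundamental theorem of calculus for $W^{1,1}$ functions.
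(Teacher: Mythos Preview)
Your proposal is correct and follows essentially the same strategy as the paper: both reduce to showing that the scalar functions $h_\eta(t) = \int_\Omega d(t,\cdot)\eta\,\dx$ admit continuous representatives on $[0,T]$, and then use density of $C_c^1(\Omega)$ (resp.\ $C^1(\overline\Omega)$) in $L^{\gamma'}(\Omega)$ together with the uniform $L^\gamma$ bound and the Riesz representation theorem to build the $C_{\rm weak}$ representative. The only cosmetic difference is that the paper establishes continuity of $h_\eta$ by testing against explicit trapezoidal cutoffs $\psi_{\tau,h}^\pm$ and invoking Lebesgue points to obtain one-sided limits $\mathfrak{d}(\tau\pm)$, whereas you go through the embedding $W^{1,1}(0,T)\hookrightarrow C([0,T])$ directly---your route is slightly more economical but not genuinely different.
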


\begin{proof}
We shall show only Statement 1. of Lemma \ref{aux3}. Statement 2. can be obtained repeating word by word the proof of Statement
1. with minor modifications.

We take in equation (\ref{dd}) test functions $\varphi(t,x)=\psi(t)\eta(x)$, where $\eta\in C^1_c(\Omega)$, and
\begin{equation*}
\psi(t) =\psi_{\tau,h}^+= 
 \begin{cases}
   \frac{1}{h}t  &\text{if $t\in [0,h] $}\\
    1 &\text{if $t\in [h,\tau] $}\\
    1-\frac{t-\tau}{h} &\text{if $t \in [\tau,\tau+h] $}\\
    0 &\text{if $t \in (\tau+h, +\infty). $}
 \end{cases}
\end{equation*}
Under assumptions on $d$, $\vc{F}$ and $G$ it is a folklore to show that this is an admissible test function in equation (\ref{dd}).

We obtain by direct calculation,
\begin{equation}\label{ddd}
\frac{1}{h}\int_{\tau}^{\tau+h}\int_{\Omega}d(t,x)\eta(x)\dxdt-\frac{1}{h}\int_{0}^{h}\int_{\Omega}d(t,x)\eta(x)\dxdt=
\end{equation}
\[\int_{0}^{\tau+h}\psi (t)\int_{\Omega}\vc{F}(t,x)\cdot \nabla \eta(x)\dxdt- \int_{0}^{\tau+h}\psi (t)\int_{\Omega}G(t,x)\eta(x)\dt \dx.\]
This identity leads to the following observations:
\begin{enumerate}
\item According to the theorem on Lebesgue points (cf. Lemma \ref{Lbg}), there is a set $N\subset(0,T)$ of zero Lebesgue measure $|N|=0$, such that
for all $\tau\in (0,T)\setminus N$, the limit $h\to 0+$ of the first expression exists. Since the limit
of the right hand side as $h\to 0+$ exists as well, we deduce that
$$
\forall\eta\in C^1_c(\Omega),\; 
\lim_{h\to 0+}\frac{1}{h}\int_{0}^{h}\int_{\Omega}d(t,x)\eta(x)\dxdt:= {\mathfrak{d}}_\eta(0+)\in { R.}
$$
The map $C^1_c(\Omega)\ni\eta\to {\mathfrak{d} }_\eta(0+)\in R$ is evidently {linear.} Moreover, since $d \in L^{\infty}(I,L^{\gamma}(\Omega))$,
we have estimate
$$
\sup_{0<h<T}\Big|\frac{1}{h}\int_{0}^{h}\int_{\Omega}d(t,x)\eta(x)\dxdt\Big|\le \|d\|_{L^\infty(0,T; L^\gamma(\Omega))}\|\eta\|_{L^{\gamma'}(\Omega))}
$$
by virtue of the H\"older inequality. In view of the Riesz representation theorem, we deduce that
there exists ${\mathfrak{d}}(0+)\in L^{\gamma}(\Omega)$ such that
$$
\forall\eta\in C^1_c(\Omega),\;
{\mathfrak{d}}_\eta(0+)=\int_\Omega {\mathfrak{d}}(0+)\eta\, {\rm d}x.
$$
\item Now, we take an arbitrary $\tau\in (0,T)$ and calculate limit $h\to 0+$ in equation (\ref{ddd}). We already know that for all $\eta\in C^1_c(\Omega)$ the limits 
of the second term at the left hand side and the limit of the right hand side exist and belong to $R$. We deduce from this fact that
$$
\lim_{h\to 0+}\frac{1}{h}\int_{\tau}^{\tau+h}\int_{\Omega}d(t,x)\eta(x)\dxdt:={\mathfrak{d}}_\eta(\tau+),
$$
where, by the same token as in the previous step,
$$
\forall\eta\in C^1_c(\Omega),\;
{\mathfrak{d}}_\eta(\tau+)=\int_\Omega{\mathfrak{d}}(\tau+)\eta\, {\rm d}x\;\mbox{with $\mathfrak{d}(\tau+)\in L^{\gamma}(\Omega)$}.
$$
\item
We test equation (\ref{dd})   by functions $\varphi(t,x)=\psi(t)\eta(x)$, where
\begin{equation*}
\psi(t) = \psi_{\tau,h}^-
 \begin{cases}
   \frac{1}{h}t  &\text{if $t\in [0,h] $}\\
    1 &\text{if $t\in [h,\tau] $}\\
    1-\frac{t-\tau+h }{h} &\text{if $t \in [\tau-h,\tau] $}\\
    0 &\text{if $t \in (\tau, +\infty). $}
 \end{cases}
\end{equation*}
It reads
\begin{equation}\label{ddd!}
\frac{1}{h}\int_{\tau-h}^{\tau}\int_{\Omega}d(t,x)\eta(x)\dxdt-\frac{1}{h}\int_{0}^{h}\int_{\Omega}d(t,x)\eta(x)\dxdt=
\end{equation}
\[\int_{0}^{\tau}\psi (t)\int_{\Omega}F(t,x) \nabla \eta(x)\dxdt- \int_{0}^{\tau}\psi (t)\int_{\Omega}G(t,x)\eta(x)\dt \dx.\]

\item By the same token as in Items 1. and 2. we define ${\mathfrak{d}}_\eta(\tau-)$ and $\mathfrak{d}(\tau-)\in L^{\gamma}(\Omega)$ for all
$\tau\in (0,T]$ . Subtracting (\ref{ddd}) and (\ref{ddd!}) and effectuating limit $h\to 0^+$, we obtain
$$
\forall \tau \in (0,T), \; \mathfrak{d}(\tau):={\mathfrak{d}(\tau+)}=  \mathfrak{d}(\tau-).
$$
We define
$$
{\mathfrak{d}}(0):=\mathfrak{d}(0+),\quad\mathfrak{d}(\tau):=\mathfrak {d}{(\tau+)},\;\tau\in (0,T),\quad \mathfrak{d}(T):=\mathfrak{d}(T-).
$$
We easily verify that $\mathfrak{d}$ satisfies equation (\ref{ddi}).

Subtracting (\ref{ddi}) with $\tau=\tau_1$ and $\tau=\tau_2$, $\tau_1,\tau_2\in [0,T]$ we readily verify that
{ $$
\forall\eta\in C^1_c(\Omega),\;\mbox{the map } \tau\mapsto\int_{\Omega}{\mathfrak{d}}(\tau)\eta\,{\rm d}x \mbox{ is continuous
on } [0,T].
$$}
Since $ C^1_c(\Omega)$ is dense in $L^{\gamma'}(\Omega)$, we finally conclude that
$$
{\mathfrak{d}}\in C_{\rm weak}([0,T]; L^\gamma(\Omega)).
$$  
\item According to theorem on Lebesgue points (cf. Proposition \ref{Lbg}), we have 
$$
d(\tau+)= d(\tau-)=d(\tau)=\mathfrak{d}(\tau)\;\mbox{a.e. in (0,T)}.
$$
This completes the proof of the fact that there exists a representative of $d$ such that $d\in C_{\rm weak}([0,T]; L^\gamma(\Omega))$.

\item It remains to show equation (\ref{ddi}). To this end we can repeat the whole procedure consisting of { Items 1.--5.}
with test functions $\varphi(t,x)=\psi(t)\xi(t)\eta(x)$, where $\psi=\psi^\pm_{\tau,h}$, $\xi\in C^1([0,T])$ and
$\eta\in C^1_c(\Omega)$.
\end{enumerate}
Lemma \ref{aux3} is thus proved.
\end{proof}

The continuity and pure transport equations are particular cases of equations investigated in Lemma \ref{aux3}. If we additionally know that
their solutions are renormalized, we can show that they not only belong to the class $C_{\rm weak}([0,T]; L^1(\Omega))$ but even to the class
 $C([0,T]; L^1(\Omega))$. 
{ This is subject of the second lemma.} 
%advertised in the preamble of this section.

\begin{lem}\label{aux4}
\begin{enumerate}
\item Let $\vu \in L^{p}(I,W^{1,q}(\Omega;R^d))$, $1\leq p\le \infty$, $1<q\leq \infty$,
\begin{equation}\label{cl1} 
\vr\in  L^{\infty}(I,L^{\gamma}(\Omega)),\;\gamma>1,\;\frac 1\gamma+\frac 1{q}\le 1+\frac 1d
\end{equation}
or
\begin{equation}\label{cl1+}
\vr \in L^{\infty}(I,L^{\gamma}(\Omega))\cap L^{p'}(I;L^{q'}(\Omega)), \gamma>1. 
\end{equation}
Suppose that $\vr$ is a renormalized distributional solution of the continuity equation (i.e. it satisfies (\ref{co1.1}), (\ref{rco1.1}) with 
renormalizing function $b$ in the class (\ref{ren})). Then there exists a representative of $\vr$ such that
$$
\vr \in C(\overline I;L^r(\Omega)),\; 1\le r<\gamma.
$$
\item The same statement, under the same assumptions on $\vu$ and
under assumption (\ref{cl1+}) holds for any renormalized distributional solution to the pure transport 
equation (satisfying (\ref{tr1.1}), (\ref{rtr1.1}) with 
renormalizing function $b$ in the class (\ref{ren})).
\end{enumerate}
\end{lem}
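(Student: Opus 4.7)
My plan is to upgrade the $C_{\rm weak}$-continuity of Lemma~\ref{aux3} to strong $L^r$-continuity by combining two ingredients: local strong continuity on compact subsets of $\Omega$ (obtained from the renormalized equation using an approximation $b_k(z)\to z^r$), together with uniform tail equi-integrability coming from the bound $\vr\in L^\infty(I;L^\gamma(\Omega))$. I detail the continuity equation; the pure transport case is identical up to replacing $(b'(\vr)\vr-b(\vr))\Div\vu$ by $b(s)\Div\vu$, the integrability $s\,\Div\vu\in L^1(Q)$ following from (\ref{cl1+}) by H\"older.

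First I would verify basic integrability: under (\ref{cl1}) Sobolev embedding of $\vu$ together with $\vr\in L^\infty(I;L^\gamma(\Omega))$ yields $\vr\vu\in L^1(Q)$, while under (\ref{cl1+}) this is direct H\"older. Lemma~\ref{aux3} then gives a representative $\vr\in C_{\rm weak}(\overline I;L^\gamma(\Omega))$. For every $b$ in class~(\ref{ren}), both $b$ and $zb'(z)$ are bounded (since $b'$ is compactly supported), so $b(\vr)$, $b(\vr)\vu$ and $(b'(\vr)\vr-b(\vr))\Div\vu$ all lie in $L^1(Q)$. A second application of Lemma~\ref{aux3} to the renormalized identity (\ref{rco1.1}) then produces the time-integrated form
\[
\int_\Omega b(\vr(\tau))\eta\,\dx - \int_\Omega b(\vr(0))\eta\,\dx = \int_0^\tau\!\!\int_\Omega\bigl(b(\vr)\vu\cdot\Grad\eta - (b'(\vr)\vr - b(\vr))\Div\vu\,\eta\bigr)\,\dxdt
\]
for every $\eta\in C^1_c(\Omega)$ and $\tau\in[0,T]$, whose right-hand side is continuous in $\tau$.

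The crucial step is to promote this to continuity of $\tau\mapsto\int_\Omega\vr^r(\tau)\eta\,\dx$ for every $1<r<\gamma$ and nonnegative $\eta\in C^1_c(\Omega)$. I would approximate $z^r$ monotonically from below by $b_k\in$~(\ref{ren}) defined by $b_k(z):=\int_0^z r\tau^{r-1}\chi_k(\tau)\,d\tau$, where $\chi_k\in C^\infty_c([0,\infty))$, $0\le\chi_k\le 1$, $\chi_k\equiv 1$ on $[0,k]$ and supported in $[0,k+1]$. Then $0\le b_k(\vr)\uparrow \vr^r$ pointwise, and since $r<\gamma$, the tail estimate $\int_{\{\vr>k\}}\vr^r\,\dx \le k^{r-\gamma}\|\vr(\tau)\|_{L^\gamma(\Omega)}^\gamma$ together with the uniform $L^\gamma$-bound yields $\sup_\tau\int_\Omega(\vr^r-b_k(\vr))\eta\,\dx\to 0$ as $k\to\infty$. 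Thus $\tau\mapsto\int_\Omega\vr^r(\tau)\eta\,\dx$ is a uniform limit of continuous functions, hence continuous.

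To conclude, fix a compact $K\subset\Omega$ and choose $\eta\in C^1_c(\Omega)$ with $\eta\equiv 1$ on $K$ and $\eta\ge 0$. The previous step gives $\|\vr(t_n)\eta^{1/r}\|_{L^r(\Omega)}\to\|\vr(t)\eta^{1/r}\|_{L^r(\Omega)}$ for every $t_n\to t$, while weak continuity in $L^\gamma$ and $\eta^{1/r}\in L^\infty$ give $\vr(t_n)\eta^{1/r}\rightharpoonup\vr(t)\eta^{1/r}$ in $L^r$. Uniform convexity of $L^r$ for $r>1$ upgrades this to strong convergence, whence $\vr\in C([0,T];L^r(K))$. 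Global continuity $\vr\in C([0,T];L^r(\Omega))$ then follows by exhausting $\Omega$ with compacts and controlling the tails via $\|\vr(\tau)\|_{L^r(\Omega\setminus K)}\le |\Omega\setminus K|^{1/r-1/\gamma}\|\vr\|_{L^\infty(I;L^\gamma(\Omega))}$, uniformly in $\tau$; the case $r=1$ reduces to some $1<r'<\gamma$ via $L^{r'}(\Omega)\hookrightarrow L^1(\Omega)$ on the bounded domain. The most delicate point I anticipate is the triangle of weak convergence, norm convergence, and uniform convexity in the localization step, made slightly awkward by the fact that the weight $\eta^{1/r}$ is only $L^\infty$ rather than $C^1$; however, $L^\infty$-boundedness of the weight still suffices to preserve weak $L^r$-convergence.
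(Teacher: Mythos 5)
Your plan follows the same global strategy as the paper (approximate by a truncation/renormalization, show continuity of a scalar quantity, combine with weak continuity through uniform convexity, then de-approximate via the $L^\gamma$ tail estimate), but you try to shortcut the central technical step. The paper spatially mollifies $\mathcal{T}_k(\vr)$, derives an equation for $[\mathcal{T}_k(\vr)]_\ep^2$, and passes through an Arzel\`a--Ascoli argument; you instead apply Lemma~\ref{aux3} directly to the renormalized identity with $b_k(z)\approx z^r$ and claim that $\tau\mapsto\int_\Omega\vr^r(\tau)\eta\,\dx$ is continuous as a uniform limit of continuous functions.

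The shortcut hides a genuine gap: the identification of representatives for \emph{all} $\tau$. Lemma~\ref{aux3}, applied once to the continuity equation and once to the renormalized equation, produces two $C_{\rm weak}$ representatives: $\mathfrak{R}\in C_{\rm weak}(\overline I;L^\gamma(\Omega))$ for $\vr$ and $\mathcal{B}_k\in C_{\rm weak}(\overline I;L^p(\Omega))$ for $b_k(\vr)$. They are related by $\mathcal{B}_k(\tau)=b_k(\mathfrak{R}(\tau))$ only for a.e.\ $\tau$. Your ``previous step'' establishes continuity of $\tau\mapsto\int_\Omega\mathcal{B}_k(\tau)\eta\,\dx$ and uniform smallness of $\esssup_\tau\int_\Omega\bigl(\mathfrak{R}(\tau)^r-b_k(\mathfrak{R}(\tau))\bigr)\eta\,\dx$, but does \emph{not} yield continuity of $\tau\mapsto\int_\Omega\mathfrak{R}(\tau)^r\eta\,\dx$ for the representative $\mathfrak{R}$ you then feed into the Radon--Riesz step: on the exceptional null set of times the quantity $\int_\Omega\mathfrak{R}(\tau)^r\eta\,\dx$ is not controlled by $\int_\Omega\mathcal{B}_k(\tau)\eta\,\dx$. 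One only obtains that $\phi(\tau):=\int_\Omega\mathfrak{R}(\tau)^r\eta\,\dx$ agrees a.e.\ with a continuous function $\widetilde\psi$ and satisfies $\phi\le\widetilde\psi$ everywhere (by weak lower semicontinuity of convex integrands); the reverse inequality, and hence the norm convergence you need, is exactly the missing piece, and trying to prove it by passing weak limits through the nonlinear maps $b_k$ or $z\mapsto z^{1/r}$ is circular. This is precisely why the paper mollifies in space: $[\mathcal{T}_k(\vr)]_\ep(\cdot,x)$ is, pointwise in $x$, a genuine scalar continuous function of $t$ (because $\mathcal{T}_k(\vr)$ is $C_{\rm weak}$ and $j_\ep(x-\cdot)$ is a fixed test function), so squaring commutes with the time-continuity, and the convergence in (\ref{D0}) holds for \emph{all} $t$. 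The Arzel\`a--Ascoli limit is then identified as $\int_\Omega\mathcal{T}_k(\vr)(t)^2\eta\,\dx$ for every $t$, which is what makes the Radon--Riesz argument go through with the correct representative. In your write-up that mechanism is absent, so the proof as written does not close.

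The localization and tail-exhaustion arguments at the end, and the weak-plus-norm-implies-strong step in $L^r$ (including the use of $\eta^{1/r}\in L^\infty$), are fine as far as they go, as is the reduction of $r=1$ to $r'>1$; the problem sits entirely in the passage from the continuity of $\tau\mapsto\int_\Omega\mathcal{B}_k(\tau)\eta\,\dx$ to the continuity of $\tau\mapsto\int_\Omega\vr^r(\tau)\eta\,\dx$.
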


\begin{proof}
Again, it is enough to prove Statement 1. dealing with the continuity  equation. The proof of Statement 2. for the pure transport equation
requires only minor modifications and is, therefore, left to the reader as an exercise. It is to be noticed that, due to the presence of term $s{\rm div}\,\vu$ in the weak 
formulation of the pure transport equation, { Statement 2. is not true under assumption (\ref{cl1}) unless $\gamma \geq q'$. }

Employing Lemma \ref{aux3} (with $d=\vr$, $\vc{F}=\vr\vu$, $G=0$) we may suppose that $\vr\in C_{\rm weak}(\overline I;L^\gamma(\Omega))$.

Since $\vr$ is a renormalized distributive solution of the continuity equation, it satisfies
\begin{equation} \label{eq19}
\partial_{t}T_{k}(\vr)+\Div(T_{k}(\vr)\vu)+(\vr T_{k}'(\vr)-T_{k}(\vr))\Div \vu=0 \quad \mbox{in} \quad {\cal D}'(Q),
\end{equation}
where for any $k>1$
$$
T_{k}(\vr)=kT{\Big(\frac{\vr}{k}\Big)}\;\mbox{ with $T\in C^{1}([0,\infty))$}, 
%$T(s)=T(-s),\forall s \in \R $},
$$
with
\begin{equation*}
T(s) = 
 \begin{cases}
    s &\text{if $0\leq s\leq 1 $}\\
    2 &\text{if $s \geq 3$.}\\
   %-2 &\text{if $s \leq -3$}\\
 \end{cases}
\end{equation*}

According to Lemma \ref{aux3} applied to (\ref{eq19}) with $d:=T_{k}(\vr)$, $\vc{F}:=T_{k}(\vr)\vu$ and $G:=(\vr T_{k}'(\vr)-T_{k}(\vr))\Div \vu$,
there exists
\begin{equation}\label{Tmol}
\T_{k}(\vr)\in C_{{\rm weak}}([0,T],L^{p}(\Omega)),\; \forall 1 \leq p < + \infty,
\end{equation}
 $$
(\mathcal{T}_{k}(\vr))(t)=T_{k}(\vr(t))\;\mbox{ a.a. in $\Omega$ for a.a. $t \in (0,T)$},
$$
 such that
\begin{equation} \label{eq19+}
\partial_{t}\mathcal{T}_{k}(\vr)+\Div(\mathcal{T}_{k}(\vr)\vu)+(\vr \mathcal{T}_{k}'(\vr)-\mathcal{T}_{k}(\vr))\Div \vu=0 \quad \mbox{in} \quad 
{\cal D}'(Q).
\end{equation}

 We can extend $\mathcal{T}_{k}(\vr)$ by $0$ outside $\Omega$ and regularize it  by using standard 
mollifiers over the space variables. The equation for mollified functions $[\T_{k}(\vr)]_{\ep}$ reads
 \begin{equation} \label{eq20}
 \partial_{t}[\T_{k}(\vr)]_{\ep}+\Div([\T_{k}(\vr)]_{\ep}\vu)+\Big[\big(\rho \T_{k}'(\vr)-\T_{k}(\vr)\big)\Div \vu\Big]_\ep=r_{\ep}
 \end{equation}
a.e. in 
$$
Q_\ep= I\times\Omega_\ep,\;\Omega_\ep=\{x\in \Omega\,|\,{\rm dist}(x,R^d\setminus\Omega)>\ep\},
$$
 where
\[
r_\ep:=r_{\ep}(\T_{k}(\vr),\vu)=\Div([\T_{k}(\vr)]_{\ep}\vu)-\Div[\T_{k}(\vr)\vu]_{\ep}\to 0\;\mbox{as $\ep\to 0$} 
\]
in  $L^{p}(I; L^{\widetilde q}( K))$, with any compact $K\subset\Omega$, $\widetilde q <q$ by virtue of  the Friedrichs lemma on commutators (cf. Lemma \ref{l2}).
  
Due to the standard properties of mollifiers 
$$
\Big[{ \vr} \T_{k}'(\vr)-\T_{k}(\vr)\Div \vu\Big]_\ep \to \vr \T_{k}'(\vr)-\T_{k}(\vr)\Div \vu
$$
in $L^{p}(I; L^{q}( K))$, $K\subset \Omega$, compact.
 
  On the other hand, since $\T_{k}(\vr)(t,\cdot))\in L^{r}(\Omega)$ {\em for all} $t \in [0,T], 1\leq r < + \infty$, we get by the same token, in
	particular,
 \begin{equation}\label{D0}
 \forall t \in [0,T] \quad [\T_{k}(\vr)(t, \cdot)]_{\ep} \to \T_{k}(\vr)(t, \cdot) \;\mbox{ in $L^{2}(K)$ with any compact $K\subset\Omega$.}
 \end{equation}

Moreover, since $\T_{k}(\vr)\in C_{\rm weak}([0,T]; L^p(\Omega))$, we infer that the mapping $t \mapsto [\T_{k}(\vr)]_{\ep}(\cdot, x)$ belongs
to $C([0,T])$ for all $x \in \Omega_\ep$ 
and hence $t \mapsto [\T_{k}(\vr)]_{\ep}^{2}(\cdot, x) \in C[0,T]$ for all $x \in \Omega_\ep$.
 %
% Now, we denote $\mathcal{J}(\Omega)\subset C_{c}^{1}(\bar{\Omega})$ a countable dense subset of $L^{2}(\Omega)$, and take $\eta \in \mathcal{J}(\Omega)$, we have %that  
%
Consequently,
 \[\Big(t \mapsto \int_{\Omega}[\T_{k}(\vr)]_{\ep}^{2}(t,x)\eta(x) \dx\Big) \in C([0,T])\]
for all $\eta \in C^1_c(\Omega)$ and $0<\ep<{\rm dist}({\rm supp}\,\eta,R^d\setminus\Omega)$. We deduce from estimate
\[
\sup_{t \in [0,T]}\int_{\Omega}[\T_{k}(\vr)]_{\ep}^{2}(t,x)\eta(x) \dx \leq  \sup_{t \in [0,T]} \|\T_{k}(\vr(t,\cdot))\|_{L^{2}(\Omega)}\|\eta\|_{L^{2}(\Omega)}
\]
\[\leq\|\T_{k}(\vr)\|_{L^\infty(0,T;L^{2}(\Omega))}\|\eta\|_{L^{2}(\Omega)}\leq C\]
 that the family of maps  
\begin{equation}\label{maps}
\Big\{t \mapsto \int\limits_{\Omega}[\T_{k}(\vr)]_{\ep}^{2}(t,x)\eta(x) \dx\,|\, 0<\ep<{\rm dist}({\rm supp}\,\eta, R^d\setminus\Omega)\Big\}
\end{equation}
is for any $k>1$ and any $\eta \in C^1_c(\Omega)$  equi-bounded in $C([0,T])$.
 
We multiply (\ref{eq20}) by $2[\T_{k}(\rho)]_{\ep}$, in order to get
 \begin{equation}\label{eqD1}
 \partial_{t}[\T_{k}(\vr)]_{\ep}^{2}+\Div([\T_{k}(\vr)]_{\ep}^{2}\vu)+[\T_{k}(\vr)]_{\ep}^{2}\Div \vu+
 \end{equation}
 \[ 
2[\T_{k}(\vr)]_{\ep}\Big[(\vr \T_{k}'(\vr)-\T_{k}(\vr))\Div \vu\Big]_\ep= 2[\T_{k}(\vr)]_{\ep}r_\ep \quad\mbox{ a.e. in $Q_\ep$}.
\]

Now, we take $\eta \in C^1_c(\Omega)$,
multiply equation (\ref{eqD1}) by $\eta$ and integrate over $\Omega$. We get, after an integration by parts,
$$
\partial_{t}\int_\Omega[\T_{k}(\vr)]_{\ep}^{2}\eta\, {\rm d} x-\int_\Omega[\T_{k}(\vr)]_{\ep}^{2}\vu\cdot\Grad\eta\, {\rm d}x+
\int_\Omega [\T_{k}(\vr)]_{\ep}^{2}\Div \vu\eta\, {\rm d}x+
$$
$$
\int_\Omega  2[\T_{k}(\vr)]_{\ep}\Big[\vr \T_{k}'(\vr)-\T_{k}(\vr)]\Div \vu\Big]_\ep\eta\, {\rm d}x= \int_\Omega 2[\T_{k}(\vr)]_{\ep}r_\ep\eta \dx ,
$$
where $0<\ep<{\rm dist}({\rm supp}\, \eta,R^d\setminus\Omega)$.

 We may integrate (\ref{eqD1}) between $t_1,t_2$, where $t_{i}\in [0,T]$, by virtue of Lemma \ref{aux3}, in order to obtain, 
\[\Big|\int_{\Omega}[\T_{k}(\vr)]_{\ep}^{2}(t_2,\cdot)\eta(x) \dx - \int_{\Omega}[\T_{k}(\vr)]_{\ep}^{2}(t_1,\cdot)\eta(x) \dx\Big|
\]
\[
\leq C\Big( \|\vu\|_{L^{p}(t_1,t_2; W^{1,q}(\Omega))} +\|r_\ep\|_{L^p(t_1,t_2;L^{\widetilde q}(\Omega))}
\| \eta \|_{C^1(\overline\Omega)}\Big)({t_{2}-t_{1}})^{1/p'}, \]
where $C$ may depend on $k$ but is independent of $0<\ep<{\rm dist}({\rm supp}\,\eta, R^d\setminus\Omega)$.
The latter inequality shows in view of Lemmas \ref{prop1}, \ref{l2} that the family of maps (\ref{maps}) 
is for any $k>1$ and $\eta \in C^1_c(\Omega)$  equi-continuous in $C([0,T])$.

Now, we denote $\mathcal{J}(\Omega)\subset C_{c}^{1}({\Omega})$ a countable dense subset of $L^{2}(\Omega)$. 
 Using Arzel\`a--Ascoli theorem and countability of $\mathcal{J}(\Omega)$ (in order to employ a diagonalization procedure) we may show that there is a subsequence of $\ep \to 0$ and $Z^{(k)}_{\eta}\in C([0,T])$ such that $\forall \eta \in \mathcal{J}({\Omega})$
\[\int_{\Omega}[\T_{k}^{2}(\vr)(t,x)]_{\ep}\eta(x)\dx \mapsto Z^{(k)}_{\eta} \;\mbox{ in} \ C[0,T]\;\mbox{as $\ep\to 0+$}.\]
By virtue of (\ref{D0})
\[Z^{(k)}_{\eta}(t)=\int_{\Omega}{\T^2_{k}(\vr)(t,x)}\eta(x)\dx.\]

Now we  use density of $\mathcal{J}({\Omega})$ in $L^{2}(\Omega)$ and the uniform bound with respect to $\ep$ of
$\sup\tau\in [0,T]\|[\T_{k}(\vr)(t,x)]_{\ep}\|_{L^2(\Omega)}$ (cf. the last inequality in Lemma \ref{Lbg} and Item 2. in Lemma \ref{prop1})
 to show that
\[
\int_{\Omega}[\T_{k}(\vr)(t,x)]_{\ep}^{2}\eta(x)\dx 
\mapsto \int_{\Omega}{\T^2_{k}(\vr)(t,x)}\eta(x) \dx 
\]
in $C([0,T])$  for  all $\eta \in L^{2}(\Omega)$. In particular, 
\begin{equation}\label{T2mol}
\forall k>1, \;\Big(t \mapsto \int_{\Omega}\T_{k}(\vr)(t,x)^{2}\dx\Big)\in C([0,T]).
\end{equation}

Resuming: According to (\ref{Tmol})
$$
\T_k(\vr(t'))\to\T_k(\vr(t))\;\mbox{weakly in $L^2(\Omega)$ as $t'\to t$}
$$ 
and according to
(\ref{T2mol}),
$$
\|\T_k(\vr(t'))\|_{L^2(\Omega)}\to\|\T_k(\vr(t))\|_{L^2(\Omega)}\;\mbox{as $t'\to t$}.
$$ 
Since weak convergence and convergence in norms in $L^2(\Omega)$ imply strong convergence, we have
$$
\T_{k}(\vr)\in C([0,T];L^{2}(\Omega))\;\mbox{ for any $k>1$}.
$$

It remains to  show that the latter formula implies $\vr \in C([0,T],L^{r}(\Omega))$, $1\le r<\gamma$. To this end, we write
$$
\sup_{t\in[0,T]}\|(\T_{k}(\vr)-\vr)(t)\|_{L^{r}(\Omega)}\leq\|T_{k}(\vr)-\vr\|_{L^\infty(0,T;L^{r}(\Omega))},
$$
where we have used the last inequality in Lemma \ref{Lbg}. Consequently, for all $t\in [0,T]$,
\[
\|(\T_{k}(\vr)-\vr)(t)\|^r_{L^{r}(\Omega)}\leq { {\rm ess \, sup}}_{t\in (0,T)}
\int_{\{|\vr|\geq k\}}2^r|\vr|^r\dx
\]
\[ 
\leq 2^r{ {\rm ess \, sup}}_{t\in (0,T)}\Big[\Big(\int_{\Omega}|\vr|^{\gamma}\dx\Big)^{\frac{r}{\gamma}}|{\{|\vr|\geq k\}}|^{\frac{\gamma-r}{\gamma}}\Big],
\]
where
$$
|{\{|\vr|\geq k\}}|\le\frac 1 k\int_{\{|\vr|\geq k\}}|\vr|\, {\rm d} x\le \frac 1k |{\{|\vr|\geq k\}}|^{1/\gamma'}\|\vr\|_{L^\gamma(\Omega)}.                                                                                                     
$$
Whence,
$$
\forall t\in [0,T], \; \|\T_{k}(\vr)-\vr\|_{L^{r}(\Omega)}\to 0\;\mbox{ as $k\to\infty$}.
$$
With this information, writing,
  \[\|\vr(t)-\vr(t')\|_{L^{r}(\Omega)}\leq \|\vr(t)-\T_{k}(\vr)(t)\|_{L^{r}(\Omega)}+\|\T_{k}(\vr)(t')-\T_{k}(\vr)(t)\|_{L^{r}(\Omega)}\]
  \[+\|\T_{k}(\vr)(t')-\vr(t')\|_{L^{r}(\Omega)},
 \]
we conclude that $\vr\in C([0,T]; L^r(\Omega))$.
 \end{proof}

 \section{Proof of Theorems \ref{th3}--\ref{th4}}\label{S6}

It is enough to outline the proof only in the case of Theorem \ref{th3}. The proof of Theorem \ref{th4} follows the same lines.

The proof of Statements 1.1 and 2.1   of Theorem \ref{th3}  is based on regularization of the equation via mollifiers, 
cf. Lemma \ref{prop1}. The regularized equation 
$$
\partial[\vr]_\ep+{\rm div}([\vr]_\ep\vu)=r_\ep(\vr,\vu),\qquad r_\ep(\vr,\vu)={\rm div}([\vr]_\ep\vu)-{\rm div}[\vr\vu]_\ep
$$
{ is satisfied}  almost everywhere
in $I\times\Omega_\ep$, $\Omega_\ep=\{x\in \Omega\,|\, {\rm dist}(x,R^d\setminus\Omega)>\ep\}$ and can be therefore multiplied by $b'([\vr]_\varepsilon)$. The Friedrichs commutator lemma (cf. Lemma \ref{l2}) ensures that the term $r_\ep\to 0$ in $L^1(I; L^1_{\rm loc}(\Omega))$. It is the main property which allows to conclude
at the first stage for $b$ in class (\ref{ren}), and consequently, for any $b$ in class (\ref{t1.3}), by using a convenient
approximation of the function $b$ in class  (\ref{t1.3})  and the dominated Lebesgue convergence theorem.  This is the standard
procedure introduced in the same context in the seminal work  \cite{DL}. 

Concerning the proof of Statements 1.2 and 2.2 of Theorem \ref{th3}, we shall show  solely the { latter.} 
Furthermore, it is enough to  deal only with the "integrability up to $\partial \Omega$" in the case of { Statement 2.2.}

We define a function $\xi_n$ as follows:
\begin{equation*}
\xi_{n}(x) :=\chi_{n}({\rm dist}\,(x,\partial\Omega))
\end{equation*}
with
\begin{equation*}
\chi_{n}(s) = \chi(ns),
\end{equation*}
where
\begin{equation*}
\chi\in C^\infty([0,\infty)),\quad 0\le \chi',\quad
\chi(s) = 
 \begin{cases}
   0 &\text{if $0\le s\leq \frac{1}{4} $}\\
    1 &\text{if $s \geq \frac{1}{2}$.}\\
 \end{cases}
\end{equation*}
%Here, the symbol ${\rm dist}(\cdot, \partial\Omega)$ denotes the distance function:
%\[{\rm dist}(x, \partial\Omega)=\min_{y \in \partial \Omega}|x-y|\]
Recall that ${\rm dist}(\cdot, \partial\Omega)$ is a 1-Lipschitz function. 

%Consenquently $\xi_{n}$ is Lipshitz too, and it belongs, in particular,
%to $W^{1,p}(\Omega)$ with any $1\le p<\infty$.

Notice that it can be deduced from the above 
\begin{equation*}
\xi_n\in C^\infty([0,\infty)),\;\xi_n'(x)\le Cn,\;
\xi_{n}(x) = 
 \begin{cases}
   0 &\text{if ${\rm dist}(x,\partial\Omega)\leq \frac{1}{4n} $}\\
   1 &\text{if ${\rm dist}(x,\partial\Omega)\geq \frac{1}{2n},$}\\
 \end{cases}
\end{equation*}
with some $C>0$ ($C$ depends on the  choice of $\chi$).

 We calculate for $\eta \in C^\infty(\overline{\Omega})$
\[\int_{\Omega}\vr(\tau, x)\psi(\tau)\eta (x)\dx-\int_{\Omega}\vr(0,x)\psi(0)\eta(x)\dx-\int_{Q}\vr(t,x) \partial_{t}\psi(t)\eta(x)\dxdt\]
\[-\int_{Q}\vr(t,x) \vu(t,x)\cdot \nabla \eta(x) \psi(t)\dxdt\]
\[=\int_{\Omega}\vr(\tau, x)\psi(\tau)\eta (x)\xi_{n}(x)\dx-\int_{\Omega}\vr(0,x)\psi(0)\eta(x)\xi_{n}(x)\dx
\]
\[
-\int_{Q}\vr(t,x) \partial_{t}\psi(t)\eta(x)\xi_{n}(x)\dxdt\]
\[-\int_{Q}\vr(t,x) \vu(t,x)\cdot \nabla \big(\eta(x) \xi_{n}(x)\big)\psi(t)\dxdt
\]
\[+\int_{\Omega}\vr(\tau, x)\psi(\tau)\eta (x)(1-\xi_{n}(x))\dx-\int_{\Omega}\vr(0,x)\psi(0)\eta(x)(1-\xi_{n}(x))\dx\]
\[-\int_{Q}\vr(t,x) \partial_{t}\psi(t)\eta(x)\big(1-\xi_{n}(x)\big){ \dxdt }
\]
\begin{equation}\label{eqxi}
-\int_{Q}\psi(t)\vr(t,x)\vu(t,x)\cdot \nabla \big(\eta(x) (1-\xi_{n}(x))\big)\dxdt.
\end{equation}

We easily verify due to the above formulas for $\xi_n$ that $\eta \xi_{n} \in W_{0}^{1,p}(\Omega)$ with any $1\leq p<+\infty$. Since $C_{{\rm c}}^{1}(\Omega)$ 
is dense in $W_{0}^{1,p}(\Omega)$, it is an admissible test function for equation (\ref{co1.1}). Consequently, the sum of first four terms at the right hand side
(terms containing $\eta \xi_{n} $)  is equal to $0$.

To complete the proof we would like to show that the limit $n \to +\infty $ of the sum of the last four terms at the right hand side of identity (\ref{eqxi}) is zero. To this aim we have to assume that all functions are integrable up to the boundary of $\Omega$.
 
We set $A_{n}:=\{x: {\rm dist}(x,\partial\Omega)\leq \frac{1}{2n}) \}$. Since $\Omega$ is a bounded Lipschitz domain, $|A_{n}|\to 0$.
In the sequel, we will systematically use this fact.

We have
\begin{enumerate}
\item
\[\int_{Q}|\vr(t,x)\partial_{t}\psi(t)\eta(x)(1-\xi_{n}(x))|\dxdt\]
\[=\int_{0}^{T}\int_{A_{n}}|\vr(t,x)\partial_{t}\psi(t)\eta(x)|\dxdt\]
\[\leq C\|\vr\|_{L^{\alpha}(0,T;L^\beta(A_n))}\|\partial_{t}\psi\|_{L^{\infty}((0,T))}\|\eta\|_{L^{\infty}(\Omega)}|A_{n}|^{1-\frac{1}{\beta}}\to 0,\quad n\to \infty.\]
\item
\[\int_{Q}|\vr(t,x)\vu(t,x)\cdot \nabla \big(\eta(x)(1-\xi_{n}(x))\big)\psi(t)|\dxdt\to 0,\quad n \to \infty, \]
where we have used the fact that $\vu \in L^{p}(I,W^{1,q}_{0}(\Omega;R^d))$. Indeed,
\[\lim_{n\to \infty}\int_{Q}\big|\vr(t,x)\vu(t,x)\cdot\nabla\big(\eta(x)(1-\xi_{n}(x))\big)\psi(t)\big|\dxdt\]
\[\leq \lim_{n\to \infty}\int_{Q}\big|\vr(t,x)\vu(t,x)\cdot \nabla \eta(x)\big(1-\xi_{n}(x)\big)\psi(t)\big|\dxdt
\]
{ \[
+\lim_{n\to \infty}\int_{Q}\big|\vr(t,x)\vu(t,x)\cdot\nabla \xi_{n}(x) \psi(t)\eta(x) \big|\dxdt\]
\[=\lim_{n\to \infty}\int_{Q}\big|\vr(t,x)\vu(t,x)\cdot\nabla \xi_{n}(x) \psi(t)\eta(x)\big|\dxdt\] }
\[\leq\lim_{n\to \infty}C\int_{0}^{T}\int_{A_{n}}\Big|\vr(t,x)\frac{\vu(t,x)}{{\rm dist}(x,\partial\Omega)}\cdot \nabla {\rm dist}(x,\partial\Omega) \psi(t)\eta(x)\Big|\dxdt\]
 \[\leq \lim_{n\to \infty}C \int_{0}^{T}\Big\|\frac{\vu(t,x)}{{\rm dist}(x,\partial\Omega)}\Big\|_{L^{q}(\Omega)} \|\vr(t)\|_{L^{\beta}(A_{n})}\|\psi\|_{L^{\infty}((0,T))} \|\eta\|_{L^{\infty}(A_{n})} \dt  \]
\[\leq \lim_{n\to \infty}C \int_{0}^{T}\|\vr(t)\|_{L^{\beta}(A_{n})}\|\psi\|_{L^{\infty}([0,T])} \|\eta\|_{L^{\infty}(A_{n})} \|\nabla \vu\|_{L^{q}(\Omega)} \dt\]
\[\leq \lim_{n\to \infty}C\|\vr\|_{L^{\alpha}(0,T;L^\beta(A_n))}\|\psi\|_{L^{\infty}((0,T))} \|\eta\|_{L^{\infty}(A_{n})} \|\nabla  \vu\|_{L^{p}(0,T;L^q(\Omega;R^{d\times d}))}\]
\[=0,\]
after employing the Hardy inequality (hence $\Omega$ must have Lipschitz boundary).
\end{enumerate}
Similarly we treat also the first two integrals over $\Omega${, where we use the fact that $\vr \in C_{{\rm weak}}(\overline{I};L^\gamma(\Omega))$ and the product $\eta(1-\xi_n)$ is bounded uniformly in $L^\infty(\Omega)$. This finishes the proof of Statement 2.2} and thus Theorem \ref{th3} as well as Theorem \ref{th4} are proved.

\section{Proof of Theorem \ref{th5}}\label{S7}

We present the proof for distributional solutions only. { The case of weak solutions follows more or less the same lines. Due to the fact that $\Omega$ is Lipschitz, we may extend the function $\vu$ to the whole $R^d$ in such a way that it belongs to $L^p(I;W^{1,q}(R^d;R^d))$ and either $\vr$ or $s$ by zero outside $\Omega$. Then, clearly, the extended $\vr$ resp. $s$ solve the continuity resp. transport equation in the whole $I\times R^d$ with the transporting velocity the extended $\vu$. We can therefore apply the mollification in $R^d$ and then equations (\ref{4.6}) hold a.e. in $I\times R^d$. Hence we may repeat the whole proof given below in $I\times R^d$.}   
%we only use test functions which are not necessarily of compact support.

%{\bf Is it true? Maybe if $\Omega$ is additionally Lipschitz or if $\vu$ has zero trace, then we may extend the velocity outside of $\Omega$, both $s$ and $\vr$ by %zero outside $\Omega$ and the equation hold in $R^d$. The whole procedure then works. As mentioned below, we may even need both. Without such assumptions, I have no %idea.}

Let us start with Statement 1. Since both $\vu \cdot \nabla \vr$ and $\vu \cdot \nabla s$ fulfill assumptions of the Friedrichs commutator lemma (Lemma \ref{l2}), we see that $[\vr]_\varepsilon$ and $[s]_\varepsilon$, the corresponding mollifications in the spatial variable satisfy a.e. in $I\times \Omega_\varepsilon$, where $\Omega_\varepsilon$ is defined in the proof of Lemma \ref{aux4}, 
\begin{equation} \label{4.6}
\begin{aligned}
\partial_t [s]_\ep + \vu \cdot \nabla [s]_\ep &= r_\ep^1, \\
\partial_t [\vr]_\ep + \Div ([\vr]_\ep \vu) &= r_\ep^2,
\end{aligned}
\end{equation}  
where $r_\ep^1\to 0$ in $L^{\tau_1}(I;L^{\sigma_1}_{{\rm loc}}(\Omega))$, $\sigma_1\in [1,q)$ if $\beta_\vr=\infty$, $\frac{1}{\sigma_1} \geq \frac{1}{\beta_\vr}+ \frac{1}{q}$ otherwise,  and $\frac{1}{\tau_1} \geq \frac{1}{\alpha_\vr}+ \frac{1}{p}$, $\tau_1<\infty$. Similarly $r_\ep^2\to 0$ in $L^{\tau_2}(0,T;L^{\sigma_2}(\Omega))$, $\sigma_2\in [1,q)$ if $\beta_s=\infty$, $\frac{1}{\sigma_2} \geq \frac{1}{\beta_s}+ \frac{1}{q}$ otherwise,  and $\frac{1}{\tau_2} \geq \frac{1}{\alpha_s}+ \frac{1}{p}$, $\tau_2<\infty$.  We may multiply \eqref{4.6}$_1$ by $[V]_\ep$ and \eqref{4.6}$_2$ by $[s_\vr]_\ep$. Thus, a.e. in { $I\times \Omega_\varepsilon$,}
$$
\partial_t ([s]_\ep [\vr]_\ep) + \Div([s]_\ep [\vr]_\ep \vu) = r_\ep^1 [\vr]_\ep + r_\ep^2 [s]_\ep,
$$
i.e.
$$
\int_0^T \int_\Omega \Big([s]_\ep [\vr]_\ep\partial_t \varphi + [s]_\ep [\vr]_\ep\vu \cdot \nabla \varphi + (r_\ep^1 [\vr]_\ep + r_\ep^2 [s]_\ep)\varphi\Big)  \dxdt =0
$$
for all $\varphi \in C^\infty_{\rm c} ((0,T)\times\Omega_\ep)$. 

We now intend to let $\ep \to 0+$. We need to verify that the first two terms converge to the corresponding counterparts while the last two terms converge to zero.

First, since the sequence $[s]_\ep$ is bounded in { $L^{\alpha_s}(I;L^{\beta_s}(\Omega_\varepsilon))$,} the term $r_\ep^2 [s]_\ep \to 0$ in $L^1((0,T)\times\Omega)$. Similarly, since $[\vr]_\ep$ is bounded in the space $L^{\alpha_\vr}(I;L^{\beta_\vr}(\Omega_\varepsilon))$,  { the other term also goes to zero.}

Next we consider the first and the second term. Indeed, the second term is more restrictive than the first one. Since $[s]_\ep \to s$ in $L^{t_s}(I;L^{r_s}_{{\rm loc}}(\Omega))$, $[\vr]_\ep \to \vr$ in $L^{t_\vr}(I;L^{r_\vr}_{{\rm loc}}(\Omega))$ and $\vu \in L^p(I;W^{1,q}(\Omega;R^d))$, we easily see $[\vr]_\ep [s]_\ep \vu \to \vr s \vu$ in $L^1(I;L^1_{{\rm loc}}(\Omega;R^d))$. This finishes the proof of Statement 1.

In the case of Statement 2 we first proceed as above and verify that $\vr s$ is a distributional (weak) solution to the continuity equation. Only in the limit passage of $[\vr]_\ep [s]_\ep \vu$ we have to employ additionally the Sobolev embedding theorem for $\vu$ in the spatial variable together with the $L^\infty$ bound in time for  { $[\vr]_\ep$ and $[s]_\ep$ if some of the exponents is equal to $\infty$, and interpolate these bounds. Next, we apply Theorem \ref{th3}, Statement 1.1,} to see that $\vr s$ is a renormalized distributional solution to the continuity equation.

%{\bf Here, in the case of weak solution, we even need  more, see Theorem \ref{th3}, Statement 1.2.}   

Furthermore, since $\vr s\in C_{{\rm weak}}(\overline{I}; L^\gamma(\Omega))$ and $\gamma >1$, we may employ Theorem { \ref{th1}, Statement 3.,} to verify that $\vr s \in C(\overline{I};L^r(\Omega))$ for any $1\leq r<\gamma$. Theorem \ref{th5} is proved.

\section{Proof of the main results}\label{S8}

\subsection{Proof of Theorem \ref{t2}}

To proof Theorem \ref{t2} we first use the fact that $\vr$ is a renormalized time integrated { weak} solution of the transport equation and use $b_{\delta}(\vr):= \frac{\delta}{\delta + \vr}$ with $\delta >0$ in the renormalized formulation. As we know that $\vr \geq 0$ a.e. in $(0,T)\times \Omega$, the function $b_\delta$ is an appropriate renormalizing function\footnote{{ Strictly speaking, function $b_\delta$ does not satisfy the second condition (\ref{ren}). Nevertheless, the map
$\vr\mapsto\vr b_\delta'(\vr)-b_\delta(\vr)$
remains bounded. We can thus take instead of $b_\delta$ a convenient approximation (e.g. $j_\ep*\max\{b_\delta(\cdot+\ep),1/\ep\}$, $\ep\in (0,\delta)$,
see Lemma \ref{prop1} for the notation) which satisfies (\ref{ren}), and then let $\ep\to 0$ in order to get (\ref{4.1}).}}.  We get 
\begin{equation} \label{4.1}
\begin{aligned}
&\int_\Omega \frac{\delta}{\delta +\vr(t,\cdot)} \varphi(t,\cdot) \dx - \int_\Omega \frac{\delta}{\delta +\vr(0,\cdot)} \varphi(0,\cdot) \dx -\int_0^t\int_\Omega \frac{\delta}{\delta +\vr} \partial_t \varphi \dx \, {\rm d}\tau \\
= & \int_0^t\int_\Omega \Big(\frac{\delta}{\delta +\vr} \vu \cdot \nabla \varphi + \Big(\frac{\delta}{\delta +\vr}-\frac{\delta \vr}{(\delta +\vr)^2}\Big)\Div \vu \Big)\dx \, {\rm d}\tau 
\end{aligned}
\end{equation}
for all $\varphi \in C_{\rm c}^\infty([0,T]\times \overline{\Omega})$.  We may let $\delta \to 0+$  in \eqref{4.1}  to get (we use the Lebesgue dominated convergence theorem; recall that $\frac{\delta}{\delta +\vr(t,x)} =1$ provided $\vr(t,x)=0$)
\begin{equation} \label{4.3}
\begin{aligned}
&\int_\Omega s_\vr(t,\cdot) \varphi(t,\cdot) \dx - \int_\Omega s_\vr(0,\cdot) \varphi(0,\cdot) \dx - \int_0^t \int_\Omega s_\vr \partial_t \varphi \dx \, {\rm d}\tau \\
  =  & \int_0^t \int_\Omega  \big(s_\vr \vu\cdot \nabla \varphi + s_\vr \Div \vu \varphi\big) \dx \, {\rm d}\tau  
\end{aligned}
\end{equation}
for all $\varphi$ as above. Here, $s_\vr$ denotes the characteristic function of the set, where $\vr=0$. 
Hence $s_\vr$ is a  time integrated weak solution to the transport equation with the function $\vu$. { Moreover, repeating the argument above with $\widetilde {b}(\vr):= b\Big(\frac{\delta}{\delta +\vr}\Big)$, where $b$ belongs to the class (\ref{ren}), we also get that $s_\vr$ is a renormalized time integrated weak solution.}
% as $s_\vr \in L^\infty(I\times \Omega)$.

%{\bf In fact, it does not follow from Theorem \ref{th4} and the theorem should be changed so that it captures also this situation.}

Since $\int_\Omega s_\vr(\tau,\cdot) \dx = |\{x\in \Omega; \vr(\tau,x)=0\}|_d$, we may subtract equations \eqref{4.3} with $\varphi = 1$ for $t:=\tau_1$ and $t:=\tau_2$ and it is easy to see that
$$
\Big|\int_{\tau_1}^{\tau_2} \int_\Omega s_\vr \Div \vu \dxdt\Big| \to 0 \qquad \text{ for } \tau_1\to \tau_2.
$$
Hence 
$$
|\{x\in \Omega; \vr(\tau,x)=0\}|_d \in C([0,T]).
$$
Note further that repeating the argument to get  \eqref{4.3} with a test function only space dependent, we get $s_\vr \in C_{\rm weak}([0,T];L^r(\Omega))$ for any $1\leq r<\infty$ and thus, by { Lemma \ref{aux4},} 
$$
s_\vr \in C([0,T];L^r(\Omega)), \qquad 1\leq r<\infty.
$$
The theorem is proved.

\subsection{Proof of Theorem \ref{t3} and Corollaries \ref{cor1}--\ref{cor2}}

The first claim of { Theorem \ref{t3}} is a direct consequence of Theorems \ref{th1} and \ref{th3}. The second claim  follows directly from Theorem { \ref{th5}, Statement 2.} The third claim is a direct consequence of formula (\ref{clRR}).

Corollary \ref{cor1} follows immediately from Theorem \ref{t3}. 

We now consider Corollary \ref{cor2}. We aim at proving that $|\{x\in \Omega|\vr(t_0,x)=0\}|_d =0$ for any $t_0 \in (0,T]$. First, for $t_0 \in (0,\tau]$, we define $\widetilde R(t):= R(t-t_0+\tau)$. Since $\vu$ is time independent, the function $\widetilde R$ is a distributional solution to the continuity equation on { $(t_0-\tau,T'+t_0-\tau)$} and we may apply Theorem \ref{t3}, in particular formula (\ref{clRR}) with $\widetilde R$ instead of $R$ and $t_0$ instead of $t$. Hence Corollary \ref{cor2} holds in the time interval $(0,\tau]$. Next we consider { $t_0\in(\tau,2\tau]$.} We redefine $\widetilde R$ as { $\widetilde R(t) :=R(t-\tau)$} and { apply} formula (\ref{clRR}) with $t:=t_0+\tau$ on the {left hand side} and $\tau$ instead of $0$ on { the right hand side. Hence $|\{x\in \Omega|\vr(t,x)=0\}|_d=0$ for $t \in [0,2\tau]$. Proceeding similarly,
% where $t_0 \in (0,\tau]$. 
after} finite number of steps we cover the whole interval $(0,T)$. Corollary \ref{cor2} is proved.  

Note finally that due to our definition of the weak solution to the compressible Navier--Stokes equations Corollary \ref{c1} follows directly, since all assumptions of Theorems \ref{t2} and \ref{t3}, as well as { of} Corollaries \ref{cor1} and { \ref{cor2},} are fulfilled.

{\bf Acknowledgement:} The work of M. Pokorn\'y was supported by the Czech Science Foundation, grant No. 16-03230S. Significant part of the paper was written during the stay of M. Pokorn\'y at the University of Toulon. The authors acknowledge this support.  

{\bf Conflict of interest:} The authors declare that they have no conflict of interest.

\end{document}